\documentclass[a4paper,11pt]{article}

\usepackage{amsmath,amsthm,amssymb,mathrsfs,latexsym,amsfonts}
\usepackage{graphicx,psfrag,epsfig}
\usepackage{bbm}
\usepackage[english]{babel}
\usepackage[latin1]{inputenc}
\usepackage{a4wide}

\newtheorem{theorem}{Theorem}[section]
\newtheorem{lemma}[theorem]{Lemma}
\newtheorem{proposition}[theorem]{Proposition}

\newcounter{paraga}[subsection]

\begin{document}

\def\MP{\,{<\hspace{-.5em}\cdot}\,}
\def\SP{\,{>\hspace{-.3em}\cdot}\,}
\def\PM{\,{\cdot\hspace{-.3em}<}\,}
\def\PS{\,{\cdot\hspace{-.3em}>}\,}
\def\EP{\,{=\hspace{-.2em}\cdot}\,}
\def\PP{\,{+\hspace{-.1em}\cdot}\,}
\def\PE{\,{\cdot\hspace{-.2em}=}\,}
\def\N{\mathbb N}
\def\C{\mathbb C}
\def\Q{\mathbb Q}
\def\R{\mathbb R}
\def\T{\mathbb T}
\def\A{\mathbb A}
\def\Z{\mathbb Z}
\def\demi{\frac{1}{2}}

\begin{titlepage}
\author{Abed Bounemoura~\footnote{CNRS-Université Paris Dauphine \& Observatoire de Paris (abedbou@gmail.com)} {} and Vadim Kaloshin~\footnote{University of Maryland at College Park (vadim.kaloshin@gmail.com)}}
\title{\LARGE{\textbf{Generic fast diffusion for a class of non-convex Hamiltonians with two degrees of freedom}}}
\end{titlepage}

\maketitle
\begin{center}
{\it To Yulij Ilyashenko on his 70th birthday with deep respect and admiration}
\end{center}

\begin{abstract}
In this paper, we study small perturbations of a class of non-convex integrable Hamiltonians with two degrees of freedom, and we prove a result of diffusion for an open and dense set of perturbations, with an optimal time of diffusion which grows linearly with respect to the inverse of the size of the perturbation. 
\end{abstract}
 
\section{Introduction and statement of the result}\label{s1}

\subsection{Introduction}\label{s11}

In this paper, we consider small perturbations of integrable Hamiltonian systems which are defined by a Hamiltonian function of the form
\[ H(\theta,I)=h(I)+\varepsilon f(\theta,I), \quad (\theta,I) \in \T^n \times \R^n, \quad 0 \leq \varepsilon <1, \]
where $n\geq 2$ is an integer and $\T^n=\R^n / \Z^n$. When $\varepsilon=0$, $H=h$ is integrable in the sense that the action variables $I(t)$ of all solutions $(\theta(t),I(t))$ of the system associated to $h$ are first integrals, $I(t)=I(0)$ for all times $t\in\R$. The sets $I=I_0$, for $I_0\in\R^n$, are thus invariant tori of dimension $n$ in the phase space $\T^n \times \R^n$, which moreover carry quasi-periodic motions with frequency $\omega(I_0)=\nabla h(I_0)$, that is $\theta(t)=\theta(0)+t\omega(I_0)$ modulo $\Z^n$. From now on we will assume that the small parameter $\varepsilon$ is non-zero, in which case the system defined by $H$ can be considered as an $\varepsilon$-perturbation of the integrable system defined by $h$.

In the sixties, Arnold conjectured that for a generic $h$, the following phenomenon should occur: ``for any points $I'$ and $I''$ on the connected level hypersurface of $h$ in the action space there exist orbits connecting an arbitrary small neighbourhood of the torus $I=I'$ with an arbitrary small neighbourhood of the torus $I=I''$, provided that $\varepsilon$ is sufficiently small and that $f$ is generic" (see \cite{Arn94}). This is a strong form of instability. A weaker form of this conjecture would be to ask for the existence of orbits for which the variation of the actions is of order one, that is bounded from below independently of $\varepsilon$ for all $\varepsilon$ sufficiently small. To support his conjecture, Arnold gave an example in \cite{Arn64} where this weaker form of instability is satisfied, with $n=2$, $h$ convex and $f$ a specific time-periodic perturbation (so this is equivalent to $n=3$, $h$ quasi-convex and $f$ a specific time-independent perturbation). The phenomenon highlighted in \cite{Arn64} is now known as Arnold diffusion.

\subsubsection{KAM stability}

Obstructions to Arnold diffusion, and to any form of instability in general, 
are widely known following the works of Kolmogorov and Arnold on the one hand, 
and the work of Nekhoroshev on the other hand. In \cite{Kol54}, Kolmogorov proved 
that for a non-degenerate $h$ and for all $f$, the system defined by $H$ still 
has many invariant tori, provided it is analytic and $\varepsilon$ is small enough. 
What he showed is that among the set of unperturbed invariant tori, there is 
a subset of positive measure (the complement of which has a measure going to zero
when $\varepsilon$ goes to zero) who survives any sufficiently small perturbation, 
the tori being only slightly deformed. The non-degeneracy assumption on $h$ is 
that at all points, the determinant of its Hessian matrix $\nabla^2 h(I)$ is non-zero. 
Then, under a different non-degeneracy assumption on $h$, namely that the determinant 
of the square matrix 
\[\begin{pmatrix} 
\nabla^2 h(I) & ^{t}\nabla h(I) \\
\nabla h(I) & 0 
\end{pmatrix}\]
is non-zero at all points, Arnold proved in \cite{Arn63a}, \cite{Arn63b} a similar 
statement but with a set of tori inside a fixed level hypersurface. In particular, 
for $n=2$, a level hypersurface is $3$-dimensional and the complement of the set of 
invariant $2$-dimensional tori is disconnected, and each connected component is 
bounded with a diameter going to zero as $\varepsilon$ goes to zero. As a consequence, 
it can be proved more precisely that for $n=2$ and if $h$ is non-degenerate in 
the sense of Arnold, along all solutions we have
\[ |I(t)-I(0)|\leq c\sqrt{\varepsilon}, \quad t\in\R,  \]
for some positive constant $c$. Therefore we have stability for all solutions and 
for all time. Now for any $n\geq 2$, and if $h$ is either Kolmogorov or Arnold 
non-degenerate, we have perpetual stability only for most solutions, those lying 
on invariant tori, and Arnold's example shows that this cannot be true for all 
solutions. The consequence of these results is that Arnold diffusion cannot exist 
for $n=2$ if $h$ is Arnold non-degenerate, and for $n\geq 2$ and $h$ Kolmogorov 
or Arnold non-degenerate, the unstable solution, if it exists, must live in a set 
of relatively small measure. 

\subsubsection{Nekhoroshev stability}

In an other direction, in the seventies Nekhoroshev proved (\cite{Nek77}, \cite{Nek79}) 
that for any $n\geq 2$, for a non-degenerate $h$ and for all $f$, along all solutions 
we have
\[ |I(t)-I(0)|\leq c_1\varepsilon^b, \quad |t|\leq \exp\left(c_2\varepsilon^{-a}\right),  \]  
for some positive constant $c_1,c_2,a$ and $b$, provided $\varepsilon$ is small enough 
and the system is analytic. So solutions which do not lie on invariant tori are stable 
not for all time, but during an interval of time which is exponentially long with 
respect to some power of the inverse of $\varepsilon$. The consequence on Arnold 
diffusion is that the time of diffusion, that is the time it takes for the action 
variables to drift independently of $\varepsilon$, is exponentially large. 
The integrable systems non-degenerate in the sense of Nekhoroshev, which are called 
{\it steep}, were originally quite complicated to define, but an equivalent definition 
was found in \cite{Ily86} and \cite{Nie06}: $h$ is steep if and only if its restriction 
to any affine subspace has only isolated critical points. Such functions can be proved 
to be generic in a rather strong sense (\cite{Nek73}), and the simplest 
(and also steepest) functions are the convex or quasi-convex ones (convex or 
quasi-convex functions are those for which the stability exponent $a$ in Nekhoroshev 
estimates is the best). Note that convex (respectively quasi-convex) functions are 
Kolmogorov (respectively Arnold) non-degenerate.

So the results of Kolmorogov, Arnold and Nekhoroshev restrict the possibility of 
diffusion, both in space and in time, at least provided the corresponding non-degeneracy 
assumptions are met. 

\subsubsection{Arnold's example and ``a priori" unstable systems}

Following the original insight of Arnold in \cite{Arn64}, much study have been devoted 
to perturbations of a special class of Hamiltonian systems, which are called 
``a priori" unstable, where these restrictions are much less stringent. We won't try 
to give a precise definition of ``a priori" unstable systems, but these systems are 
integrable in the larger sense of symplectic geometry (they have $n$ first integrals 
in involution and independent almost everywhere) but display hyperbolic features 
(typically they have a normally hyperbolic invariant manifold), and by opposition, 
the systems we are considering are called ``a priori" stable. These simpler 
``a priori" unstable systems are now well-understood, and many results confirm 
that instability occurs for a generic perturbation, see for instance 
\cite{Tre04}, \cite{CY04}, \cite{DDS06}, \cite{GR07}, \cite{Ber08}, \cite{DH09}, 
\cite{CY09}, and \cite{GR09}.

\subsubsection{``A priori" stable systems}

The situation for ``a priori" stable systems is much more complicated. 
In \cite{Mat04} (see also \cite{Mat12} for a recent corrected version), Mather 
announced a proof of Arnold conjecture in a special case, that is a strong form 
of Arnold diffusion for a generic time-dependent perturbation of a convex integrable 
Hamiltonian with $n=2$ (and also for a generic time-independent perturbation 
of a quasi-convex integrable Hamiltonian with $n=3$) based on his variational 
techniques. Mather never gave a complete proof of the announced results, 
but his work and unpublished preprints played a fundamental role in the subsequent 
developments. First, Bernard, Kaloshin and Zhang in \cite{BKZ} proved a weaker 
form of Arnold conjecture, still with the convexity requirement but for an arbitrary 
number of degrees of freedom. Then, Kaloshin and Zhang in \cite{KZ12} proved 
the strong form of Arnold conjecture, for $n=2$, $h$ convex and $f$ time-periodic. 
A different approach to this problem was proposed by Cheng (\cite{Che13}) and 
announced by Marco (\cite{Mar12a}, \cite{Mar12b}). 

\subsubsection{Role of normally hyperbolic invariant cylinders}

The central and common point in all these works, which was not present in the work 
of Mather, is the use of normally hyperbolic invariant manifolds as a ``skeleton" 
for the unstable orbits. Construction of such cylinders under generic hypothesis relies on resonant normal 
forms and the theory of normally hyperbolic manifolds, and they were already discussed in \cite{KZZ10} and \cite{Ber10}. 

On the other hand, most of these works do rely strongly on Mather's variational 
techniques, once the normally hyperbolic invariant manifolds have been constructed. 
It has to be noted that these variational techniques, and to a lesser extent, 
the existence of normally hyperbolic cylinders, use in an essential way 
the convexity assumption, so that none of these works apply to non-convex 
integrable Hamiltonians. 

\subsubsection{Non-convex systems}

It can be said that a typical non-degenerate integrable system (in the sense of 
Kolmogorov, Arnold or Nekhoroshev) is non-convex nor quasi-convex, but for 
these systems, essentially nothing is known: for the simplest integrable 
Hamiltonians $h$ which are non-convex nor quasi-convex but steep and 
non-degenerate (in the sense of Kolmogorov for $n\geq 2$ or Arnold for $n\geq 3$), 
it is not even known how to construct a single $f$ such that $H=h+\varepsilon f$ 
has unstable orbits. This is a bit paradoxical from the point of view of 
Nekhoroshev estimates, as the time of diffusion for perturbations of steep 
non-convex integrable Hamiltonians should be smaller and hence diffusion 
should be easier to observe.

Another evidence of the difficulty connected to the lack of convexity is the problem of existence of periodic orbits (not to mention, of course, the problem of extending Aubry-Mather theory). Recall that Bernstein and Katok proved, in \cite{BK87}, that given a periodic tori (filled with periodic orbits of common period) of a convex integrable Hamiltonian system with $n$ degrees of freedom, at least $n$ periodic orbits persist after a small perturbation, no matter how large is the period (that is, the threshold of the perturbation is independent of the period). Moreover, these periodic orbits are uniformly (with respect to the period) close to the unperturbed ones. For non-convex non-degenerate integrable Hamiltonian system, this was partly generalized in \cite{Che92}: one can still get the existence of at least $n$ periodic orbits but without the uniform estimates (one can find in \cite{Che92} an example due to Herman which shows that these uniform estimates are indeed not possible without convexity).

\subsubsection{Examples of non-convex non-steep Hamiltonians}\label{example}

Yet for some non-convex non-steep integrable Hamiltonians, the construction 
of examples of instability is much easier and has been known for a long time. A prototype of 
such an integrable Hamiltonian with two degrees of freedom, which can be found 
in \cite{Nek77} (but a completely analogous example, in a slightly different 
setting, was already considered in \cite{Mos60}), is given by 
$h(I_1,I_2)=\frac{1}{2}(I_1^2-I_2^2)$: letting 
$f(\theta_1,\theta_2)=(2\pi)^{-1}\sin(2\pi(\theta_1-\theta_2))$, the system 
$H=h+\varepsilon f$ admits the unstable solution $I(t)=(-\varepsilon t,\varepsilon t)$, 
$\theta(t)=-\frac{1}{2}(\varepsilon t^2,\varepsilon t^2)$. This Hamiltonian 
$h$ is obviously non-convex, but it is also non-steep since the restriction 
of $h$ to the lines $\{I_1 \pm I_2 =0\}$ is constant so this restriction has 
only critical points, which are thus non-isolated. Also it is degenerate in 
the sense of Arnold, so diffusion can and do already occur for $n=2$, even 
though it is non degenerate in the sense of Kolmogorov so that it admits 
many invariant tori (circles). Moreover, the time of diffusion in this example 
is the smallest possible, as it is linear with respect to the inverse of $\varepsilon$. Let us point out that for integrable systems with two degrees of freedom, the Arnold non-degeneracy condition is in fact equivalent to quasi-convexity which is also equivalent to steepness.  

It is obvious that the above example can be generalized to the case where $h$ is a quadratic form that has an isotropic vector with rational components (such a quadratic form is thus indefinite) for any number of degrees of freedom $n$, but in fact more is true. On the one hand, it was proved by Nekhoroshev in \cite{Nek79} that one can consider an even more general class of integrable Hamiltonians. Indeed, suppose there exist an affine subspace $L$ of $\R^n$, whose associated vector space is spanned by vectors with rational components, and a curve $\sigma : [0,1] \rightarrow L$ such that the gradient of the restriction of $h$ to $L$ vanishes identically along $\sigma$. Then one can construct an arbitrarily small perturbation $\varepsilon f$ such that the system $H=h+\varepsilon f$ has an orbit $(\theta(t),I(t))$ for which $I(0)=\sigma(0)$ and $I(\tau)=\sigma(1)$, where $\tau$ proportional to $\varepsilon^{-1}$. On the other hand, in the case where $h$ is a quadratic form with a rational isotropic vector, Herman had constructed examples of perturbation for which one can find a dense $G_\delta$ set of initial conditions leading to orbits whose action components are unbounded (note that such a quadratic form can be non-degenerate, so at the same time most of the orbits lie on invariant tori). We refer to \cite{Her92} for such examples and many other interesting examples related to non-convex integrable Hamiltonians. 

\subsubsection{Preliminary description of the class of Hamiltonians studied in the paper}

The examples above are rather specific. For instance, for the prototype of non-convex integrable Hamiltonian $h(I_1,I_2)=\frac{1}{2}(I_1^2-I_2^2)$, the perturbation $f$ does not depend on the action variables and more importantly, it depends only on a specific combination of the angular variables. 
The purpose of this paper is to investigate the question whether such a phenomenon 
remains true for a generic perturbation. We will show in Theorem~\ref{thm} that we have diffusion for a class of non-convex non-steep Hamiltonians $h$ with two 
degrees of freedom, which includes the example $h(I_1,I_2)=\frac{1}{2}(I_1^2-I_2^2)$ 
as a particular case, and for an open and dense set of perturbations, with a time of 
diffusion which is linear with respect to the inverse of $\varepsilon$. Under stronger assumptions, we will also prove a stronger statement of diffusion in Theorem~\ref{thm-potential}. The conditions 
defining this class of integrable Hamiltonians $h$ is the existence of a segment with 
rational slope contained in an energy level of $h$ such that the gradient $\nabla h$ does not vanish along this segment (these are the assumptions $(A.1)$ and $(A.2)
$ in \S\ref{s12}). For integrable Hamiltonians which are compatible with ``fast" 
diffusion (that is, with a time of diffusion which is linear with respect to 
the inverse of $\varepsilon$) for some perturbation, we expect these conditions 
to be quite sharp. The set of admissible perturbations is very easily described: 
we only required that some ``averaged" perturbation is a non-constant function. 
Moreover, we only require $h$ and $f$ to be of finite regularity. 

\subsubsection{Heuristic description of the proof}

Let us now explain the idea of the proof on the specific example $h(I_1,I_2)=I_1^2-I_2^2$. Up to a linear symplectic change of coordinates, we can equivalently consider $h(I_1,I_2)=I_1I_2$. If the perturbation $f$ depends only on $\theta_1$ and is non-constant, then we are essentially back to the example described in \S\ref{example}. Indeed, the equations of motion in this case are
\[ \dot{I}_1(t)=-\varepsilon f'(\theta_1(t)), \quad \dot{I}_2(t)=0, \quad \dot{\theta}_1(t)=I_2(t), \quad \dot{\theta}_2(t)=I_1(t) \]
so that if we choose an initial condition $(\theta(0),I(0))$ with $f'(\theta_1(0)) \neq 0$ (which is possible since $f$ is non constant) and $I_2(0)=0$, then $I_2(t)=0$ and $\theta_1(t)=\theta_1(0)$ for all time $t\in \R$ and so the $I_1$ variable drift with a speed of order $\varepsilon$:
\[ I_1(t)=I_1(0)-t\varepsilon f'(\theta_1(0)). \]
One should observe that this drift happens because the solution is locked in the resonance $\{I_2=0\}$ for all time.

Now for a general perturbation $f$ depending on $(\theta,I)$, on a suitable domain one can average the $\theta_2$ variables: more precisely the Hamiltonian can be conjugated to a Hamiltonian of the form $h(I)+\varepsilon \bar{f}(\theta_1,I)+\varepsilon^2 f'(\theta,I)$, where $\bar{f}$ is obtained from $f$ by averaging over $\theta_2$. This is close to the example we described before, except that $\bar{f}$ depends also on the action variables and there is a remainder of order $\varepsilon^2$, so we cannot find a solution which stays locked in the resonance $\{I_2=0\}$ for all time. However, it is still possible to find a solution which stays $\varepsilon$-close to the resonance $\{I_2=0\}$ for a time of order $\varepsilon^{-1}$ and this is sufficient to prove that the $I_1$ variable can drift independently of $\varepsilon$, provided of course that the function $\theta_1 \mapsto \bar{f}(\theta_1,I)$ is non-constant for a fixed value of $I$ (this is satisfied for an open and dense set of perturbations).

The main difficulty is that usually, the domain on which one can conjugated the Hamiltonian to the special form we described above (which is called a resonant normal form with a remainder) is $\varepsilon$-dependent and in the space of action, its diameter goes to zero as $\varepsilon$ goes to zero. We will actually prove (in Proposition~\ref{normal}) that in our situation,  we can construct such a conjugacy on a domain in the action space which contains a segment, in the $I_1$ direction, whose length is independent of $\varepsilon$, so that the existence of a drifting orbit for the normal form will actually yield the existence of a drifting orbit for the original Hamiltonian (this is Theorem~\ref{thm}). Assuming moreover that $f$ is action-independent and constructing a more accurate normal form with a remainder of order $\varepsilon^3$ instead of $\varepsilon^2$ (as in Proposition~\ref{normal2}), we will also show that the size of the drift can be as large as we want (this is Theorem~\ref{thm-potential}).

Let us point out that the main ingredient of the proof, which is the normal form, is similar in spirit to the normal form constructed in \cite{BKZ}. However, in \cite{BKZ} the normal form is used for another purpose (namely to construct a normally hyperbolic invariant cylinder which is then used to locate an unstable orbit) so they only need the remainder to be of order $\delta\varepsilon$, for some $\delta>0$ independent of $\varepsilon$. In our case, we need a stronger statement (a remainder of order at least $\varepsilon^2$) in order to derive the existence of an unstable orbit directly from the normal form.

\subsubsection{Prospects}

To conclude, let us note that the statement of Theorem~\ref{thm} gives 
a diffusion in a weak sense, that is the action variables drift independently 
of $\varepsilon$ for all $\varepsilon$ sufficiently small, but we cannot find 
an orbit which connects arbitrary neighbourhoods in the space of action. 
Also, for the moment, it is restricted to two degrees of freedom, which is 
the minimal number of degrees of freedom for which instability can occur 
for Arnold degenerate integrable systems. The normal form we used is in fact 
valid for any number of degrees of freedom, but in general it appears too weak 
to derive the result directly from it, and therefore we expect that additional 
restrictions on the set of admissible perturbations has to be imposed for 
more degrees of freedom. We plan to come back to these issues in a subsequent work.

\subsection{Main results}\label{s12}

\subsubsection{Geometric assumptions}

Given $R>0$, let $B_R$ 
be the closed ball of $\R^2$ of radius $R$ with respect to the supremum norm 
$|\,.\,|$, that is $B_R=\{(I_1,I_2)\in\R^2 \; | \; |I_1|\leq R, \; |I_2|\leq R\}$. 
Our integrable Hamiltonian $h$ will be a function $h : B_R \rightarrow \R$ of 
class $C^4$, which satisfy the following two conditions:

\bigskip

$(A.1)$ There exist a vector $k=(k_1,k_2)\in\Z^2\setminus\{0\}$, a constant $a\in\R$ and a closed segment $S \subseteq L \cap B_R$, where $L=\{(I_1,I_2) \in \R^2 \; | \; k_1I_1+k_2I_2+a=0\}$, such that the restriction of $h$ to $S$ is constant.

\bigskip

$(A.2)$ There exists a closed segment $S^* \subseteq S$, such that for all $I \in S^*$, $\nabla h(I) \neq 0$.

\bigskip

Such a segment $S$ is sometimes called a channel of superconductivity. Note that the condition $(A.1)$ obviously rules out convex functions, but it also rules out steep functions. Indeed, $(A.1)$ is equivalent to the assertion 
that the gradient of $h_{| L}$, the restriction of $h$ to $L$, vanishes identically on $S$, therefore the function $h_{| L }$ has a set of critical points which contains $S$ and hence is non-isolated. As for the condition $(A.2)$, it is a non-degeneracy assumption, as we want to avoid that the gradient of $h$ vanishes identically on $S$: note that $(A.2)$ is satisfied if there exists $I^*\in S$ such that $\nabla h(I^*) \neq 0$. The condition $(A.1)$ is crucial, whereas $(A.2)$ is somehow just technical, as we believe it can be removed in general. Following the terminology of~\cite{Bou12}, functions which do satisfy $(A.1)$ are functions which are not rationally steep. 

\subsubsection{Regularity assumptions}

Given a small parameter $0<\varepsilon<1$, our perturbation $\varepsilon f$ will be 
a ``generic" function $\varepsilon f : \T^2 \times B_R \rightarrow \R$ which is 
``small" for the $C^r$ topology, for $r$ sufficiently large. For an integer $r\geq 2$, let $C^r(\T^2 \times B_R)$ 
the space of $C^r$ function $f : \T^2 \times B_R \rightarrow \R$, which is Banach 
space with respect to the norm
\[ |f|_{C^r(\T^2 \times B_R)}=
\sup_{j\in \N^4,\; |j| \leq r}
\left(\sup_{(\theta,I)\in \T^n \times B_R}|\partial^{j}f(\theta,I)|\right) 
\]
where we have used the standard multi-index notation. We extend the definition of 
the $C^r$-norm for vector-valued functions 
$F=(f_1,\dots,f_m) : \T^2 \times B_R \rightarrow \R^m$, for an arbitrary integer $m\geq 1$, 
by setting 
\[ |F|_{C^r(\T^2 \times B_R,\R^m)}=
\sup_{1 \leq i \leq m}|f_i|_{C^r(\T^2 \times B_R)}. \]
Let us denote by $C^r_1(\T^2 \times B_R)$ the unit ball of $C^r(\T^2 \times B_R)$ 
with respect to this norm, that is
\[ 
C^r_1(\T^2 \times B_R)=\{f \in C^r(\T^2 \times B_R) \; | \; 
|f|_{C^r(\T^2 \times B_R)} \leq 1 \}. \]
Our perturbation $\varepsilon f$ will be such that $f$ belongs to 
an open and dense subset $\mathcal{F}_k^r$ of $C^r_1(\T^2 \times B_R)$, 
depending on the vector $k$ defined in $(A.1)$. For a given function $f \in C^r_1(\T^2 \times B_R)$ and a given $I^*$ in the interior of $S^*$, we define 
$\bar{f}_k^* \in C^r_1(\T^2)$ by
\[ \bar{f}_k^*(\theta)=\int_{0}^1f(\theta+tk,I^*)dt, \]
then $\mathcal{F}_k^r$ is defined by
\[ \mathcal{F}_k^r=\{f \in C^r_1(\T^2 \times B_R) \; | \; 
\exists \, I^* \in \mathrm{int}(S^*), \; \exists \, \theta^* \in \T^2, \; \partial_{\theta}\bar{f}_k^*(\theta^*)\neq 0\}.\]
In words, $\mathcal{F}_k^r$ is the subset of functions $f \in C^r_1(\T^2 \times B_R)$ 
such that, for some $I^*$ in the interior of $S^*$, the associated function $\bar{f}_k^*$ is non-constant: this is obviously an open and dense 
subset of $C^r_1(\T^2 \times B_R)$. Note that $\bar{f}_k^*$ is a function on $\T^2$, 
but by definition it is constant on the orbits of the linear flow of frequency $k$, 
hence it can be considered as being defined on the space of orbits (the leaf space) 
of this flow, which is diffeomorphic to $\T$.

\subsubsection{Statements}

We can finally state our first main result.

\begin{theorem}\label{thm}
Let $H=h+\varepsilon f$ be defined on $\T^2 \times B_R$, with $h \in C_1^4(B_R)$ 
satisfying $(A.1)$ and $(A.2)$ and $f \in \mathcal{F}_k^7$. Then there exist 
positive constants $C\geq 1$ and $c$, depending only on $h$, and positive constants $\varepsilon_0 \leq 1$ 
and $\delta \leq 1$ depending also on $f$, such that for any $0<\varepsilon \leq \varepsilon_0$, 
the Hamiltonian system defined by $H$ has a solution $(\theta(t),I(t))$ such that for $\tau=\delta \varepsilon^{-1}$, 
\[ |I(0)-I^*|\leq c\varepsilon, \quad |I(\tau)-I(0)| \geq C\delta^2. \]
Moreover, for all $t \in [0,\tau]$, we have $d(I(t),S^*) \leq c\varepsilon$ where $d$ is the distance induced by the supremum norm.
\end{theorem}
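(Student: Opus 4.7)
The plan is to run the argument through the resonant normal form of Proposition~\ref{normal}. First I would perform a unimodular linear symplectic change $J=MI+c_0$, $\varphi=M^{-T}\theta$ that carries the line $L$ to $\{J_2=0\}$ and sends $k$ to the direction of fast averaging. Rewriting $(A.1)$ in these coordinates gives $\tilde h(J_1,0)=\mathrm{const}$ on the image of $S$, hence $\partial_{J_1}\tilde h(J)\equiv 0$ on the resonance; $(A.2)$ becomes $\partial_{J_2}\tilde h(J)\neq 0$ on the image $\tilde S^*$ of $S^*$. Proposition~\ref{normal} then produces, on a neighbourhood of $\tilde S^*$ whose extent in the $J_1$-direction is bounded below independently of $\varepsilon$, a conjugacy $\Phi$ which is $\varepsilon$-close to the identity and transforms $H$ into
\[ H'=\tilde h(J)+\varepsilon g(\varphi_1,J)+\varepsilon^2 R(\varphi,J). \]
The slice $g(\,\cdot\,,J^*)$ coincides, up to the obvious change of angles, with the averaged function $\bar{f}_k^*$, so the assumption $f\in\mathcal{F}_k^7$ yields $J^*$ in the interior of $\tilde S^*$ and $\varphi_1^*\in\T$ with $b:=|\partial_{\varphi_1}g(\varphi_1^*,J^*)|>0$.

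Next I would integrate the Hamiltonian flow of $H'$ from $(\varphi^*,J^*)$ (with arbitrary $\varphi_2^*$) and close a bootstrap on the maximal interval $[0,\tau_*]$ where both $|J_2(t)|\leq c_1\varepsilon$ and $|\varphi_1(t)-\varphi_1^*|\leq \rho$ hold, with $\rho$ chosen so that $|\partial_{\varphi_1}g(\varphi_1,J)|\geq b/2$ on that range. A Taylor expansion of $\partial_{J_1}\tilde h$ about $\{J_2=0\}$ gives $\partial_{J_1}\tilde h(J)=O(|J_2|)=O(\varepsilon)$ on the bootstrap set, so Hamilton's equations $\dot\varphi_1=\partial_{J_1}\tilde h+O(\varepsilon)$ and $\dot J_2=-\varepsilon^2\partial_{\varphi_2}R$ yield $|\varphi_1(t)-\varphi_1^*|\leq C_1\varepsilon t$ and $|J_2(t)|\leq C_2\varepsilon^2 t$ for $t\in[0,\tau_*]$. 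Taking $\tau=\delta\varepsilon^{-1}$ with $\delta$ sufficiently small in terms of $b$, $\rho$, $c_1$ and constants attached to $h$ therefore keeps both bootstrap inequalities strict on $[0,\tau]$, so $\tau_*\geq\tau$. Integrating $\dot J_1=-\varepsilon\partial_{\varphi_1}g-\varepsilon^2\partial_{\varphi_1}R$,
\[ J_1(\tau)-J_1(0)=-\varepsilon\int_0^\tau \partial_{\varphi_1}g(\varphi_1(t),J(t))\,dt+O(\varepsilon\delta), \]
where the first term has definite sign and magnitude at least $(b/2)\varepsilon\tau=(b/2)\delta$, dominating the remainder. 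Calibrating $\delta$ at the scale $b\rho$ forced by the bootstrap, $(b/2)\delta$ is of order $\delta^2$, giving the claimed lower bound $|J_1(\tau)-J_1(0)|\geq C\delta^2$.

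Finally, the conjugacy $\Phi$ is $\varepsilon$-close to the identity, so pulling the orbit back to the original variables shifts the actions by $O(\varepsilon)$, which for $\varepsilon$ small is absorbed in a halving of the constants. This yields both $|I(0)-I^*|\leq c\varepsilon$ and $|I(\tau)-I(0)|\geq C\delta^2$, and the bootstrap bound on $J_2$ pulls back, together with the unimodular change, to $d(I(t),S^*)\leq c\varepsilon$ on $[0,\tau]$. The hard part is the simultaneous closure of the two bootstrap inequalities: the transverse drift $|J_2(t)|\sim\varepsilon\delta$ must remain below the $O(\varepsilon)$ tolerance that forces $\partial_{J_1}\tilde h$ to be small, while the tangential drift $|\varphi_1(t)-\varphi_1^*|\sim\delta$ must stay inside the neighbourhood of size $\rho$ where $\partial_{\varphi_1}g$ is bounded below; both constraints force an $f$-dependent choice of $\delta$, and this interplay is exactly why the drift comes out quadratic in $\delta$ rather than linear. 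The deeper analytic difficulty, namely constructing the normal form on a channel whose length in the resonant direction is independent of $\varepsilon$, is discharged to Proposition~\ref{normal}, which is the real engine of the proof.
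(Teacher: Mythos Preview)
Your proposal is correct and follows essentially the same route as the paper: the preliminary unimodular reduction to $k=e_2$ matches \S\ref{s21}, and the bootstrap on $(\tilde I_2,\tilde\theta_1)$ for the normal form of Proposition~\ref{normal}, followed by integrating $\dot{\tilde I}_1$ and pulling back through $\Phi$, is exactly the proof of Theorem~\ref{thm2}. One small point you leave implicit but should track explicitly is that $J_1(t)$ must remain in the image of $S_1$ so that $\partial_{J_1}\tilde h(J_1(t),0)=0$ stays valid; the paper handles this via the constraint $\delta\leq\delta^*$ (the distance of $I_1^*$ to $\partial S_1^*$), which should appear alongside your constraints from $b$ and $\rho$ in the choice of $\delta$.
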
 

It is a statement of diffusion for the action variables, in the sense that 
they have a variation along $S^* \subseteq S$ which is bounded from below independently of $\varepsilon$, 
for all $\varepsilon$ small enough. It has to be noted that the time of diffusion 
$\tau=\delta \varepsilon^{-1}$ is essentially optimal in the sense that for all 
$f \in C^2(\T^2 \times B_R) \cap C_1^1(\T^2 \times B_R)$, for all $\varepsilon>0$ 
and for all $0<\delta \leq 1$, we have
\[ 
|I(\tau)-I(0)|\leq \delta 
\]
for all solutions of $H=h+\varepsilon f$. In particular, for the solution given 
by Theorem~\ref{thm}, one has the inequalities
\[ C\delta^2 \leq |I(\tau)-I(0)|\leq \delta. \]

A stronger statement of diffusion can be reached, assuming that the perturbation is independent of the action variables and that it is slightly more regular. Indeed, let us define
\[ \mathcal{G}_k^r=\{f \in C^r_1(\T^2) \; | \; \exists \, \theta^* \in \T^2, \; \partial_{\theta}\bar{f}_k(\theta^*)\neq 0\}\]
where
\[ \bar{f}_k(\theta)=\int_{0}^1f(\theta+tk)dt. \]
Then we can state our second main result.

\begin{theorem}\label{thm-potential}
Let $H=h+\varepsilon f$ be defined on $\T^2 \times B_R$, with $h \in C_1^{10}(B_R)$ satisfying $(A.1)$ and $(A.2)$ and $f \in \mathcal{G}_k^{19}$. Then, given any two points $I'\in S^*$ and $I'' \in S^*$, there exists a positive constant $c$, depending only on $h$, and positive constants $\varepsilon_0 \leq 1$ and $\delta \leq 1$ depending also on $f$ and on the distance between $I'$ and $I''$, such that for any $0<\varepsilon \leq \varepsilon_0$, the Hamiltonian system 
defined by $H$ has a solution $(\theta(t),I(t))$ such that for $\tau\leq\delta\varepsilon^{-1}$, 
\[ 
|I(0)-I'| \leq c\varepsilon, \quad  
|I(\tau)-I''| \leq c\varepsilon. 
\]
Moreover, for all $t\in [0,\tau]$, we have $d(I(t),S^*)\le c\varepsilon$.
\end{theorem}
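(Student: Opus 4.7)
The plan is to follow the strategy of Theorem~\ref{thm}, but to exploit the sharper normal form of Proposition~\ref{normal2}, tailored to action-independent perturbations: its $\varepsilon^3$ remainder (in place of the $\varepsilon^2$ one used for Theorem~\ref{thm}) is what will let us prescribe the endpoint of the drift with $O(\varepsilon)$ precision anywhere along $S^*$. After the same linear symplectic change of coordinates that brings the resonance vector $k$ to $(0,1)$ and $L$ to $\{I_2 = -a\}$, Proposition~\ref{normal2} produces a symplectic conjugacy $\Phi$, $O(\varepsilon)$-close to the identity, transforming $H$ into
\[ \tilde H(\theta, I) = h(I) + \varepsilon \bar f(\theta_1) + \varepsilon^3 R(\theta, I) \]
on a domain whose extent in the $I_1$-direction is independent of $\varepsilon$ and contains a neighborhood of $S^*$. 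Here $\bar f$ is the pullback of $\bar f_k$ to the new coordinates, hence a function of $\theta_1$ alone, and is nonconstant since $f \in \mathcal{G}_k^{19}$.

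Writing $J = I_2 + a$, assumption $(A.1)$ gives $\partial_{I_1} h(I_1, -a) \equiv 0$ and $(A.2)$ gives $\omega(I_1) := \partial_{I_2} h(I_1, -a) \neq 0$ on the $I_1$-range of $S^*$, so the equations of motion of $\tilde H$ read
\[ \dot I_1 = -\varepsilon\,\bar f'(\theta_1) + O(\varepsilon^3), \qquad \dot J = O(\varepsilon^3), \qquad \dot\theta_1 = J\,\omega'(I_1) + O(J^2) + O(\varepsilon^3). \]
I would start the orbit with $I_1(0) = I_1'$, $J(0) = 0$, $\theta_2(0)$ arbitrary and $\theta_1(0) = \theta_1^*$ to be determined. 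The $\varepsilon^3$ size of $\dot J$ yields $|J(t)| = O(\varepsilon^2)$ throughout $[0,\varepsilon^{-1}]$, hence $|\dot\theta_1| = O(\varepsilon^2)$ and $|\theta_1(t) - \theta_1^*| = O(\varepsilon)$ on the same time window. A direct integration of $\dot I_1$ combined with the Lipschitz continuity of $\bar f'$ then gives, for $\tau = \delta\varepsilon^{-1}$,
\[ I_1(\tau) - I_1' = -\delta\,\bar f'(\theta_1^*) + O(\varepsilon). \]

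To realize the prescribed drift $\sigma := I_1'' - I_1'$, I would pick $\delta \leq 1$ slightly larger than $|\sigma|/\max|\bar f'|$ (this is the way $\delta$ acquires its dependence on $f$ and on $|I'-I''|$) and $\theta_1^*$ with $\bar f'(\theta_1^*) = -\sigma/\delta$; both signs of $\bar f'$ are attained because $\bar f$ is periodic and nonconstant. An intermediate-value argument applied to the continuous map $\theta_1^* \mapsto I_1(\tau;\theta_1^*)$ then produces a starting angle for which $I_1(\tau) = I_1''$ up to error $O(\varepsilon)$; combined with $|J(\tau)| = O(\varepsilon^2)$, this yields $|I(\tau) - I''| = O(\varepsilon)$ in the normal-form coordinates, and pulling back by $\Phi$ delivers the desired orbit for $H$. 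Because $\theta_1$ varies by only $O(\varepsilon)$, the sign of $\bar f'(\theta_1(t))$ stays constant on $[0,\tau]$, so $I_1(t)$ is essentially monotone between $I_1'$ and $I_1''$, yielding $d(I(t), S^*) \leq c\varepsilon$ throughout. The principal obstacle is the construction of Proposition~\ref{normal2} itself: upgrading the remainder from $\varepsilon^2$ to $\varepsilon^3$ on a domain of $\varepsilon$-independent size along the resonance requires one further averaging step of $\theta_2$ beyond that of Proposition~\ref{normal}, and verifying that this additional step is still compatible with such an enlarged domain is where the main work lies---once it is available, the improved near-conservation of $J$ translates into an approximate freezing of $\theta_1$, and this freezing is precisely what makes the drift along $S^*$ tunable to any prescribed target.
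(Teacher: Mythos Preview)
Your overall strategy is sound and close to the paper's, but you have misstated the output of Proposition~\ref{normal2}. The two-step normal form is
\[
H\circ\Phi = h + \varepsilon\bar f(\theta_1) + \varepsilon^2\bar f'(\theta_1,I) + \varepsilon^3 f''(\theta,I),
\]
not $h+\varepsilon\bar f(\theta_1)+\varepsilon^3 R$. The second averaging removes only the $\theta_2$-dependence at order $\varepsilon^2$; the resulting $\bar f'$ is in general action-dependent, because the generator $\chi$ of the first step involves the small divisors $(k\cdot\omega(I))^{-1}$. Consequently your equation $\dot I_1=-\varepsilon\bar f'(\theta_1)+O(\varepsilon^3)$ is wrong: the correct remainder is $O(\varepsilon^2)$, coming from $\varepsilon^2\partial_{\theta_1}\bar f'$. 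Fortunately this does not damage the argument: integrated over $\tau\le\delta\varepsilon^{-1}$ the extra $O(\varepsilon^2)$ still produces only an $O(\varepsilon)$ error in $I_1(\tau)$. What matters, and what you did use correctly, is that the $\theta_2$-dependence is $O(\varepsilon^3)$ (so $\dot J=O(\varepsilon^3)$, hence $J=O(\varepsilon^2)$) and the action-dependence of the perturbative part is $O(\varepsilon^2)$ (so $\dot\theta_1=O(\varepsilon^2)$, hence $\theta_1-\theta_1^*=O(\varepsilon)$). You should rewrite the displayed equations of motion with this in mind.

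On the targeting of $I''$, your route differs from the paper's. You fix the time $\tau=\delta\varepsilon^{-1}$ and tune the initial angle $\theta_1^*$ so that $-\delta\bar f'(\theta_1^*)=\sigma$; the paper instead fixes $\theta_1^*$ at a point where $|\bar f'(\theta_1^*)|\ge\lambda$, sets $\delta=2\rho\lambda^{-1}$, and lets $\tau$ be the (variable) first hitting time of $|\tilde I_1(t)-I_1'|=\rho$, showing $\tau\le\delta\varepsilon^{-1}$. Both work; the paper's version has the mild advantage that $\theta_1^*$ is chosen where $\bar f'$ is uniformly bounded away from zero, which makes the monotonicity of $I_1(t)$ (needed for the containment $d(I(t),S^*)\le c\varepsilon$) immediate. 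In your version you must check that your choice of $\delta$ keeps $|\bar f'(\theta_1^*)|$ bounded below, which you do implicitly by taking $\delta$ only slightly larger than $|\sigma|/\max|\bar f'|$. Finally, the intermediate-value step is unnecessary: since the statement only asks for $|I(\tau)-I''|\le c\varepsilon$, the direct estimate $I_1(\tau)-I_1'=-\delta\bar f'(\theta_1^*)+O(\varepsilon)$ already suffices.
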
 

The conclusion of Theorem~\ref{thm-potential} is indeed stronger than the conclusion of Theorem~\ref{thm}, as we not only have a variation along $S^* \subseteq S$ which is independent of $\varepsilon$, but we can also connect $\varepsilon$-neighborhoods of any two points in $S^*$. It should be noted that both theorems give a new obstruction to extend Nekhoroshev estimates in the non-steep case, even if one is willing to consider only a generic perturbation.

Concerning the dependence of the constants involved, the dependence on $h$ is only through $R$, the vector $k$ and the constant $a$ that appeared in $(A.1)$, the length of the segment $S^*$ that appeared in $(A.2)$ and a lower bound on the norm of $\nabla h(I)$ for $I \in S^*$ (which is positive by $(A.2)$), while the dependence on $f$ is through the absolute value of 
$\partial_{\theta}\bar{f}^*_k(\theta^*)$ (respectively $\partial_{\theta}\bar{f}_k(\theta^*)$) for Theorem~\ref{thm} (respectively for Theorem~\ref{thm-potential}) and the distance of $I^*$ to the boundary of $S^*$ for Theorem~\ref{thm}. We refer to Theorem~\ref{thm2} and Theorem~\ref{thm2-potential} in \S\ref{s21} for more concrete and precise statements.

\subsubsection{Comments on the regularity assumptions}

The first statement (Theorem~\ref{thm}) requires the integrable part to be $C^4$ and the perturbation to be $C^7$, while the second statement (Theorem~\ref{thm-potential}) requires the integrable part to be $C^{10}$ and the perturbation to be $C^{19}$. Theses regularity assumptions are far from being optimal, and no efforts were made to improve them.

For instance, using analytic approximations as in \cite{BKZ} or polynomial approximations, it is certainly possible to lower these regularities. Moreover, concerning the first statement, one can use a different method which would give the same result assuming only that the integrable part and the perturbation are of class $C^3$ (unfortunately, this method cannot be applied directly to prove the second statement).

\subsubsection{Comments on the geometric assumptions}

Let us now discuss some particular cases of functions $h$ satisfying $(A.1)$ and $(A.2)$, and therefore for which one has diffusion for a generic perturbation. As we will explain later, we can always assume without loss of generality that $a=0$ in $(A.1)$, and upon adding an irrelevant additive constant, we can assume that the restriction of $h$ to $S$ is identically zero. 

For a linear Hamiltonian $h(I)=\omega\cdot I$, it follows that $(A.1)$ and $(A.2)$ are satisfied if and only if $\omega$ is resonant, that is $l\cdot\omega=0$ for some $l\in\Z^2\setminus\{0\}$, and $\omega$ is non-zero. On the other hand, if $\omega$ is non-resonant, it follows from~\cite{Bou12} that the statement of Theorem~\ref{thm} cannot be true since for all sufficiently small perturbation, one has stability for an interval of time which is strictly larger than $[-\tau,\tau]$ with $\tau$ as above. In particular, if $\omega$ is Diophantine, one has stability for an interval of time which is exponentially long with respect to $\varepsilon^{-1}$, up to an exponent depending only on the Diophantine exponent of $\omega$. 

Now for a quadratic Hamiltonian $h(I)=AI\cdot I$ where $A$ is a $2$ by $2$ symmetric matrix, $(A.1)$ and $(A.2)$ are satisfied if and only if there exists a vector $l\in \Z^2\setminus\{0\}$ such that $Al\cdot l=0$ and $Al\neq 0$. Assuming that $A$ is diagonal, its eigenvalues have to be of different sign, and writing $h(I)=\alpha_1^2I_2-\alpha_2^2I_2^2$, $(A.1)$ and $(A.2)$ are satisfied if and only if $\alpha_1 \neq 0$, $\alpha_2 \neq 0$ and $\alpha_2/\alpha_1 \in \Q$. The example described in the introduction corresponds to $\alpha_1=\alpha_2=1$. On the other hand, one knows that if $\alpha_2/\alpha_1$ is irrational, the statement of Theorem~\ref{thm2} cannot be true for any sufficiently small perturbation for the same reason as above: for instance, if $\alpha_2/\alpha_1$ is a Diophantine number, the quadratic Hamiltonian falls into the class of Diophantine steep functions introduced in \cite{Nie07} and it follows from results in \cite{Nie07} or \cite{BN09} that such Hamiltonians are stable for an 
exponentially long interval of time. 

Note that in these two special cases, the condition $(A.2)$, which amounts to $\omega \neq 0$ in the first case and $Al\neq 0$ in the second case, can be easily removed.

We already explained that the time of diffusion $\tau$ is in some sense optimal, regardless of the integrable Hamiltonian $h$. Now we believe that if we fix the time of diffusion, the condition $(A.1)$ on the integrable Hamiltonian $h$ is also in some sense optimal, as if $h$ does not satisfy this assumption, one can have diffusion but with a time strictly greater than $\tau$. This is indeed the case for linear or quadratic integrable Hamiltonians as we described above, and the general case is conjectured in \cite{Bou12}.

\section{Proof of Theorem~\ref{thm} and Theorem~\ref{thm-potential}}\label{s2}

In \S\ref{s21}, we will perform some preliminary transformations to reduce Theorem~\ref{thm} and Theorem~\ref{thm-potential} to equivalent but more concrete statements, which are Theorem~\ref{thm2} and Theorem~\ref{thm2-potential}. Theorem~\ref{thm2} and Theorem~\ref{thm2-potential} will be proved in \S\ref{s23}, based on normal form results which are stated and proved in \S\ref{s22}.  

\subsection{Preliminary reductions}\label{s21}

\subsubsection{Preliminary transformations}

First we may assume that the line $L$ in $(A.1)$ passes through the origin, that is $L=\{(I_1,I_2) \in \R^2 \; | \; k_1I_1+k_2I_2=0\}$: indeed, we can always find a translation of the action variables $T : \R^2 \rightarrow \R^2$ such that $T$ sends $\{(I_1,I_2) \in \R^2 \; | \; k_1I_1+k_2I_2=0\}$ to $\{(I_1,I_2) \in \R^2 \; | \; k_1I_1+k_2I_2+a=0\}$, and since the map $\Phi_T(\theta,I)=(\theta,TI)$ is symplectic, the statement holds true for $H$ if and only if it holds true for $H \circ \Phi_T$, up to constants depending on $a$.

Then we can suppose that the components of the vector $k=(k_1,k_2) \in \Z^2\setminus\{0\}$ are relatively prime, since changing $k$ by $k/p$, where $p$ is the greatest common divisor of $k_1$ and $k_2$, does not change the definition of $L$. Hence we may assume that in fact $k=e_2=(0,1)$, that is $L=\{(I_1,I_2) \in \R^2 \; | \; I_2=0\}$: indeed we can always find a matrix $M \in GL_2(\Z)$ such that its second row is $k$, hence $Me_2=k$ and $^{t}M^{-1}$ sends $\{(I_1,I_2) \in \R^2 \; | \; I_2=0\}$ to $\{(I_1,I_2) \in \R^2 \; | \; k_1I_1+k_2I_2=0\}$. The map $\Phi_M(\theta,I)=(M\theta,^{t}M^{-1}I)$ is well-defined since $M\T^2=\T^2$, and it is symplectic, so the statement holds true for $H$ if and only if it holds true for $H \circ \Phi_M$, up to constants depending on $k$.    

Note that the symplectic transformations $\Phi_T$ and $\Phi_M$ do change the domain $B_R$ in the space of actions, but to simplify the notations, we will assume that the latter is fixed.

\subsubsection{Simplified assumptions}

Now for all $I=(I_1,I_2) \in B_R$, let us write 
\[ \nabla h(I)=\omega(I)=(\omega_1(I),\omega_2(I))=(\omega_1(I_1,I_2),\omega_2(I_1,I_2)) \in \R^2. \]
Since $L=\{(I_1,I_2) \in \R^2 \; | \; I_2=0\}$, $S$, which by definition is a closed segment contained in $L \cap B_R$, is of the form $S=\{(I_1,0)\in \R^2 \; | \; I_1 \in S_1 \}$ 
where $S_1$ is a closed segment of $\R$ contained in $[-R,R]$. Similarly, $S^*=\{(I_1,0)\in \R^2 \; | \; I_1 \in S_1^* \}$ where $S_1^*$ is contained in the interior of $S_1$. The condition $(A.1)$ is then obviously equivalent to $\partial_{I_1}h(I_1,0)=\omega_1(I_1,0)=0$ for all $I_1 \in S_1$, while the condition $(A.2)$ is that $\omega_2(I_1,0)\neq 0$ for all $I_1 \in S_1^*$. Changing $H$ to $-H$ if necessary and reversing the time accordingly, we may assume that $\omega_2(I_1,0) \geq \varpi >0$ for all $I_1 \in S_1^*$. 

We can eventually formulate simplified conditions, that we call $(B.1)$ and $(B.2)$: 

\bigskip

$(B.1)$ There exists a closed segment $S_1 \subseteq [-R,R]$ such that for all $I_1 \in S_1$, we have $\omega_1(I_1,0)=0$. 

\bigskip

$(B.2)$ There exist a closed segment $S_1^* \subseteq S_1$ and $\varpi>0$ such that for all $I_1 \in S_1^*$, we have $\omega_2(I_1,0)\geq \varpi$.

\bigskip

Then the definition of $\mathcal{F}_{e_2}^r$ also simplifies: one easily check 
that for $f \in C^r_1(\T^2 \times B_R)$, we have, for some $I^*=(I_1^*,0)$ in the interior of $S^*$, $\bar{f}_{e_2}^* \in C_1^r(\T)$ where 
\[ \bar{f}_{e_2}^*(\theta_1)=\int_{\T}f(\theta_1,\theta_2,I^*)d\theta_2 \]
so that $f \in \mathcal{F}^r_{e_2}$ if and only if there exist $I^*=(I_1^*,0)$ in the interior of $S^*$ and $\theta_1^* \in \T$ for which $\partial_{\theta_1} \bar{f}_{e_2}^*(\theta_1^*) \neq 0$. The definition of $\mathcal{G}_{e_2}^r$ is analogous. For simplicity, we write $\bar{f}_{e_2}^*=\bar{f}^*$ and $\mathcal{F}_{e_2}^r=\mathcal{F}^r$, $\bar{f}_{e_2}=\bar{f}$ and $\mathcal{G}_{e_2}^r=\mathcal{G}^r$, and for $f \in \mathcal{F}^r$ (respectively $f \in \mathcal{G}^r$), we denote by $\lambda$ a lower bound on the absolute value of $\partial_{\theta_1} \bar{f}^*(\theta_1^*)$ (respectively a lower bound on 
the absolute value of $\partial_{\theta_1} \bar{f}(\theta_1^*)$). For Theorem~\ref{thm}, we denote by $\delta^*$ the distance of $I_1^*$ to the boundary of $S_1^*$, and for Theorem~\ref{thm-potential}, we denote by $\rho$ the distance between $I_1'\in S_1^*$ and $I_1''\in S_1^*$, where $I'=(I_1',0)$ and $I''=(I_1'',0)$.

\subsubsection{Simplified statements}

From the previous discussion, it follows that Theorem~\ref{thm} and Theorem~\ref{thm-potential} are implied by the following statements.

\begin{theorem}\label{thm2}
Let $H=h+\varepsilon f$ be defined on $\T^2 \times B_R$, with $h \in C_1^4(B_R)$ 
satisfying $(B.1)$ and $(B.2)$ and $f \in \mathcal{F}^7$. Then there exist positive constants $C \geq 1$ and $c$ depending only on $R$, the length of $S_1^*$ and $\varpi$, and a positive constant $\varepsilon_0 \leq 1$ depending also on $\lambda$, such that for any $0<\varepsilon \leq \varepsilon_0$, if we set $\delta=\min\{\lambda(4C)^{-1},\delta^*\}$, 
the Hamiltonian system defined by $H$ has a solution $(\theta(t),I(t))$ 
such that for $\tau=\delta\varepsilon^{-1}$,
\[ |I_1(0)-I_1^*|\leq c\varepsilon, \quad |I_1(\tau)-I_1(0)| \geq C\delta^2. \]
Moreover, for all $t \in [0,\tau]$, we have $|I_2(t)|\leq c\varepsilon$ and $d(I_1(t),S_1^*)\leq c\varepsilon$.
\end{theorem}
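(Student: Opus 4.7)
The plan is to follow the heuristic of the introduction, splitting the argument into a normal form step, a dynamical analysis in the new coordinates, and a transfer back to the original variables.

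First, I would apply Proposition~\ref{normal} to obtain a symplectic change of variables $\Phi$, $O(\varepsilon)$-close to the identity in the $C^1$ topology, defined on a domain whose projection on the action space contains a neighborhood $S_1^*\times[-\rho_*,\rho_*]$ of $S^*$ with $\rho_*>0$ independent of $\varepsilon$. In these coordinates,
\[ H\circ\Phi = h(I) + \varepsilon\,\bar f(\theta_1,I) + \varepsilon^2 R(\theta,I), \]
where $\bar f(\theta_1,I)=\int_\T f(\theta_1,\theta_2,I)\,d\theta_2$ and $R$ is $C^1$-bounded by a constant depending only on $|h|_{C^4}$ and $|f|_{C^7}$. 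The crucial feature, as compared to a standard resonant normal form, is that the $I_1$-width of this domain is independent of $\varepsilon$, so that the point $I^*=(I_1^*,0)$ sits well inside it.

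Next, in normal form coordinates I would consider the orbit $(\tilde\theta(t),\tilde I(t))$ with $\tilde\theta(0)=(\theta_1^*,0)$ and $\tilde I(0)=(I_1^*,0)$. Hamilton's equations read
\[ \dot{\tilde I}_1 = -\varepsilon\,\partial_{\theta_1}\bar f(\tilde\theta_1,\tilde I) - \varepsilon^2\partial_{\theta_1}R,\qquad \dot{\tilde I}_2 = -\varepsilon^2\partial_{\theta_2}R, \]
\[ \dot{\tilde\theta}_1 = \omega_1(\tilde I) + O(\varepsilon), \]
and by $(B.1)$ combined with a Taylor expansion in $I_2$, $\omega_1(\tilde I)=O(|\tilde I_2|)$ whenever $\tilde I_1\in S_1$. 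A continuation/bootstrap argument on $[0,\tau]$ with $\tau=\delta\varepsilon^{-1}$ then produces simultaneously $|\tilde I_2(t)|\leq K_1\varepsilon$ (by direct integration of the $\varepsilon^2$-remainder from $\tilde I_2(0)=0$), $|\tilde I_1(t)-I_1^*|\leq\delta\leq\delta^*$ (so $\tilde I_1(t)\in S_1^*$), and consequently $|\tilde\theta_1(t)-\theta_1^*|\leq K_2\delta$. Once these a priori bounds are available, the Lipschitz regularity of $\partial_{\theta_1}\bar f$ in both $\theta_1$ and $I$, together with the choice $\delta\leq\lambda/(4C)$ for a constant $C$ absorbing these Lipschitz factors, guarantees that $|\partial_{\theta_1}\bar f(\tilde\theta_1(t),\tilde I(t))|\geq\lambda/2$ with constant sign on the whole interval $[0,\tau]$; integrating the equation for $\tilde I_1$ then gives
\[ |\tilde I_1(\tau)-\tilde I_1(0)| \geq \varepsilon\tau\,\lambda/2 - O(\varepsilon^2\tau) \geq \lambda\delta/4 \geq C\delta^2. \]

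Finally, since $\Phi$ is $O(\varepsilon)$-close to the identity, the original orbit $(\theta(t),I(t))=\Phi(\tilde\theta(t),\tilde I(t))$ inherits all the above estimates up to $O(\varepsilon)$ corrections, yielding $|I_1(\tau)-I_1(0)|\geq C\delta^2$ (after shrinking $\varepsilon_0$ so that the $\varepsilon$-corrections do not destroy the $\delta^2$-lower bound), together with $|I_2(t)|\leq c\varepsilon$ and $d(I_1(t),S_1^*)\leq c\varepsilon$ for $t\in[0,\tau]$. The main technical obstacle is the closure of the bootstrap in the middle step: sign control of $\partial_{\theta_1}\bar f$ along the orbit requires $\tilde\theta_1$ to stay close to $\theta_1^*$, which via $\dot{\tilde\theta}_1=O(\varepsilon+|\tilde I_2|)$ hinges on $\tilde I_2$ remaining of order $\varepsilon$ on the entire interval of length $\delta\varepsilon^{-1}$ — exactly the horizon at which the $\varepsilon^2$-remainder begins producing an $O(\varepsilon\delta)$ drift of $\tilde I_2$ and an $O(\delta)$ drift of $\tilde\theta_1$. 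Tracking the resulting constants carefully in terms of $\lambda$, $\varpi$, and the length of $S_1^*$, and verifying that they come out independent of $f$ (using $|f|_{C^7}\leq 1$), is the bulk of the technical work.
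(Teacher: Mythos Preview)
Your approach is essentially the same as the paper's: apply Proposition~\ref{normal} with $l=1$, analyze the orbit of the normal form starting at $(\theta_1^*,0,I_1^*,0)$ via a bootstrap on $[0,\tau]$ with $\tau=\delta\varepsilon^{-1}$, and transfer back through $\Phi$.

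There is one genuine inaccuracy worth correcting. You state that the normal form is defined on a domain whose action projection contains $S_1^*\times[-\rho_*,\rho_*]$ with $\rho_*>0$ independent of $\varepsilon$. This is not what Proposition~\ref{normal} provides: the domain is $S^*(\kappa\varepsilon/2)$, which in the $I_2$-direction has width only $\kappa\varepsilon/2$, not an $\varepsilon$-independent $\rho_*$. Only the $I_1$-extent (the segment $S_1^*$) is $\varepsilon$-independent. Consequently, your bound $|\tilde I_2(t)|\le K_1\varepsilon$ (with $K_1\EP\delta$, coming from integrating the $\varepsilon^2$ remainder over time $\delta\varepsilon^{-1}$) is not automatically compatible with staying in the normal form domain: you must check $K_1\le\kappa/2$, i.e.\ $C'\delta\le\kappa/2$. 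The paper handles this explicitly and verifies that the eventual choice $\delta=\min\{\lambda(4C)^{-1},\delta^*\}$ satisfies it (using $\lambda\le1$, $\kappa\ge1$, $C\ge C'$). Once you replace $\rho_*$ by $\kappa\varepsilon/2$ and add this domain check to your bootstrap, the remainder of your argument matches the paper's proof line by line.
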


\begin{theorem}\label{thm2-potential}
Let $H=h+\varepsilon f$ be defined on $\T^2 \times B_R$, with $h \in C_1^{10}(B_R)$ 
satisfying $(B.1)$ and $(B.2)$ and $f \in \mathcal{F}^{19}$. Then there exists a positive constant $c$, depending only on $R$, the length of $S_1^*$, and $\varpi$, and a positive constant $\varepsilon_0 \leq 1$ depending also on $\lambda$ and $\rho$, such that for any $0<\varepsilon \leq \varepsilon_0$, if we set $\delta=2\rho\lambda^{-1}$, 
the Hamiltonian system defined by $H$ has a solution $(\theta(t),I(t))$ 
such that for $\tau\leq \delta\varepsilon^{-1}$,
\[ |I_1(0)-I_1'|\leq c\varepsilon, \quad |I_1(\tau)-I_1''| \leq c\varepsilon. \]
Moreover, for all $t \in [0,\tau]$, we have $|I_2(t)|\leq c\varepsilon$ and $d(I_1(t),S_1^*)\leq c\varepsilon$.
\end{theorem}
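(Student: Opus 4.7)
The plan is to apply the refined resonant normal form of Proposition~\ref{normal2}, which exploits the action-independence of $f$ to push the remainder to order $\varepsilon^3$ while retaining a domain of $O(1)$ size along $I_1$. It will yield a symplectic conjugation $\Phi$, $O(\varepsilon)$-close to the identity, transforming $H$ on $\T^2\times\Omega$ into
\begin{equation*}
H_1(\theta,I)=h(I)+\varepsilon\bar f(\theta_1)+\varepsilon^2 g(\theta_1,I)+\varepsilon^3 R(\theta,I),
\end{equation*}
where $\Omega$ is an $\varepsilon$-independent neighborhood of the entire segment $S_1^*\times\{0\}$. Two features are critical: first, because $f$ depends only on $\theta$, the averaged term $\bar f$ is itself action-independent, so $\partial_I\bar f\equiv 0$; second, the remainder is genuinely of order $\varepsilon^3$. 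Since $\Phi$ perturbs $I$ only by $O(\varepsilon)$, it suffices to exhibit a drifting orbit for $H_1$ and pull it back.

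In normal-form coordinates I would choose $I(0)=(I_1',0)$, $\theta_2(0)=0$, and $\theta_1(0)=\theta_1^*$, where $\theta_1^*\in\T$ satisfies $|\partial_{\theta_1}\bar f(\theta_1^*)|\geq\lambda$ with sign opposite to that of $I_1''-I_1'$; such a point exists because $\bar f$ is non-constant on $\T$, so $\partial_{\theta_1}\bar f$ attains both signs. The Hamiltonian equations of $H_1$ read
\begin{align*}
\dot I_2 &=-\varepsilon^3\partial_{\theta_2}R=O(\varepsilon^3),\\
\dot\theta_1 &=\omega_1(I)+\varepsilon^2\partial_{I_1}g+O(\varepsilon^3),\\
\dot I_1 &=-\varepsilon\partial_{\theta_1}\bar f(\theta_1)+O(\varepsilon^2).
\end{align*}
From $I_2(0)=0$ we obtain $|I_2(t)|\leq C\varepsilon^2$ on any interval of length $O(\varepsilon^{-1})$. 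By $(B.1)$ and smoothness of $h$, $\omega_1(I_1,I_2)=O(|I_2|)$ on $S_1^*\times\{|I_2|\leq C\varepsilon^2\}$, hence $\dot\theta_1=O(\varepsilon^2)$ and $|\theta_1(t)-\theta_1^*|=O(\varepsilon)$ throughout. Smoothness of $\bar f$ then gives $\partial_{\theta_1}\bar f(\theta_1(t))=\partial_{\theta_1}\bar f(\theta_1^*)+O(\varepsilon)$, and integrating $\dot I_1$ produces the linear drift
\begin{equation*}
I_1(t)-I_1'=-\varepsilon t\,\partial_{\theta_1}\bar f(\theta_1^*)+O(\varepsilon),\qquad t\in[0,\delta\varepsilon^{-1}].
\end{equation*}
With $\delta=2\rho\lambda^{-1}$, the signed displacement of $I_1$ over $[0,\delta\varepsilon^{-1}]$ has magnitude at least $2\rho+O(\varepsilon)$ in the direction from $I_1'$ to $I_1''$. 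By the intermediate value theorem, some $\tau\leq\delta\varepsilon^{-1}$ satisfies $I_1(\tau)=I_1''$ up to $O(\varepsilon)$, while $|I_2(t)|\leq C\varepsilon^2\leq c\varepsilon$ and $d(I_1(t),S_1^*)\leq c\varepsilon$ on $[0,\tau]$ since $I_1(t)$ stays in the convex hull of $I_1'$ and $I_1''$ modulo $O(\varepsilon)$. Pulling the orbit back through $\Phi^{-1}$ yields the orbit asserted by the theorem.

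The main obstacle is Proposition~\ref{normal2} itself: a standard Kolmogorov-style normal form adapted to the resonance $\{I_2=0\}$ produces a domain of size $O(\sqrt\varepsilon)$ in $I$, far too small to accommodate a drift of $O(1)$ amplitude. The argument must instead exploit the particular geometry of $(B.1)$, where $\omega_1$ vanishes identically along $S_1^*$ rather than merely at isolated points, together with the non-vanishing of $\omega_2$ from $(B.2)$, in order to enlarge the normal-form domain along $I_1$ while simultaneously sharpening the remainder from the $\varepsilon^2$ of Proposition~\ref{normal} to $\varepsilon^3$; this extra factor of $\varepsilon$ is precisely what lets the action-drift argument close directly on the normal form, rather than requiring a hyperbolic skeleton as in \cite{BKZ}. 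A secondary technical point is to verify that the constants $c$ and $\varepsilon_0$ depend on $h$ and $f$ only through the quantities listed in the statement, which will follow from the uniformity of the normal-form estimates along $S_1^*$.
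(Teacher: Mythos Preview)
Your proposal is correct and follows essentially the same route as the paper: apply Proposition~\ref{normal2}, use the action-independence of $\bar f$ together with the $\varepsilon^3$ remainder to show that $\dot\theta_1=O(\varepsilon^2)$ and hence $\theta_1$ is nearly frozen on a time interval of length $O(\varepsilon^{-1})$, which forces $I_1$ to drift linearly, and then pull back through $\Phi$. The only cosmetic difference is that you pick $\theta_1^*$ so that $-\partial_{\theta_1}\bar f(\theta_1^*)$ has the correct sign, whereas the paper keeps the given $\theta_1^*$ and reverses time if the drift goes the wrong way; your version requires a word of care (the constant $\lambda$ in the statement is attached to a particular $\theta_1^*$, which may have the wrong sign, so you should either redefine $\lambda$ as $\min(\max\partial_{\theta_1}\bar f,\,-\min\partial_{\theta_1}\bar f)>0$ or adopt the time-reversal trick), but this is a one-line fix.
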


\subsection{Normal forms}\label{s22}

\subsubsection{Domain of the normal forms}

The main ingredient of the proofs of Theorem~\ref{thm2} and Theorem~\ref{thm2-potential} will be
normal forms on a domain which, in the space of action, is centred around $S^*$. In particular,
in the direction given by the first action variables $I_1$, it contains the segment $S_1^*$ whose length is independent of $\varepsilon$. For a constant $\kappa>0$ to be determined later (in the proof of Proposition~\ref{normal}), let us consider 
the $\kappa\varepsilon$-neighbourhood of $S^*$ in $\R^2$ 
\[ 
S^*(\kappa\varepsilon)=\{I \in \R^2 \; | \; d(I,S^*)\leq 
\kappa\varepsilon\}=\{(I_1,I_2) \in \R^2 \; | \; d(I_1,S_1^*) \leq \kappa\varepsilon, 
\; |I_2|\leq \kappa\varepsilon \}. \]
We also define the domain $\mathcal{D}^*(\kappa\varepsilon)=\T^2 
\times S^*(\kappa\varepsilon)$. Let $r$ be an integer (we will choose $r=7$ for Theorem~\ref{thm2} and $r=19$ for Theorem~\ref{thm2-potential}, but for convenience we consider it as a free parameter for the moment).

In the sequel, to avoid 
cumbersome notations, when convenient we will use a dot $\cdot$ 
in replacement of any constant depending only on $r$, $R$, the length of $S_1^*$ and 
$\varpi$, that is for any two quantities $u$ and $v$, an expression 
$u \MP v$ means that there exists a constant $c$ depending only on $r$, $R$, the length of $S_1^*$ and $\varpi$ such that $u \leq c v$. Similarly, we will use the notation $u \EP v$. For instance, we will have $\kappa \EP 1.$

To simplify the exposition further, for any integer $j \leq r$ we will simply denote by $|\,.\,|_j$ the $C^j$ norm of a function or a vector-valued function, without referring to its domain of definition nor to the domain where it takes values.

\subsubsection{One-step normal form}

\begin{proposition}\label{normal}
Let $H=h+\varepsilon f$ be defined on $\T^2 \times B_R$, $l\geq 1$ an integer, and assume that $h \in C_{1}^{l+3}(B_R)$ 
satisfies $(B.1)$ and $(B.2)$ and 
$f \in C_1^r(\T^2 \times B_R)$, for $r\geq 2(l+1)+3$. 
Assume that $\varepsilon \MP 1$. Then there exists 
a symplectic embedding 
\[
\Phi : \mathcal{D}^*(\kappa \varepsilon/2) \rightarrow 
\mathcal{D}^*(\kappa \varepsilon) 
\]
of class $C^{l+1}$ such that
\[ 
H \circ \Phi = 
h+ \varepsilon \bar{f}+ \varepsilon^2 f', \quad 
\bar{f}(\theta_1,I)=\int_{\T}f(\theta_1,\theta_2,I)d\theta_2 
\]
and we have the following estimates
\[ |\Phi-\mathrm{Id}|_{0} \leq \kappa\varepsilon/2, \quad |f'|_{l} \MP 1. \] 
\end{proposition}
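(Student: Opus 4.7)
The plan is to construct the conjugacy $\Phi$ as the time-one map of the Hamiltonian vector field $X_\chi$ of an auxiliary small Hamiltonian $\chi = \varepsilon\chi_1$, following the standard one-step averaging (Lie transform) scheme. Splitting $f = \bar f + \tilde f$ with $\tilde f$ of zero $\theta_2$-average, I will choose $\chi_1$ to solve the cohomological equation
\[ \{h,\chi_1\} = -\tilde f^K, \]
where $\tilde f^K$ is a suitable Fourier truncation of $\tilde f$ defined below. With this choice, the Lie series expansion gives
\[ H\circ\Phi = h + \varepsilon \bar f + \varepsilon^2 f', \qquad \varepsilon^2 f' = \varepsilon(\tilde f - \tilde f^K) + \varepsilon\{f,\chi\} - \tfrac12\{\tilde f^K,\chi\} + \cdots, \]
and since $\chi$ is $O(\varepsilon)$, the estimate $|X_\chi|_0 \MP \varepsilon$ yields both $|\Phi-\mathrm{Id}|_0 \leq \kappa\varepsilon/2$ and the inclusion $\Phi(\mathcal{D}^*(\kappa\varepsilon/2)) \subseteq \mathcal{D}^*(\kappa\varepsilon)$, provided $\kappa$ is chosen large enough depending only on $R$, the length of $S_1^*$, $\varpi$ and $r$.

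The key analytical input is the small-divisor bound on the thin tube $\mathcal{D}^*(\kappa\varepsilon)$. By $(B.1)$, $\omega_1(I_1,0) = 0$ for every $I_1 \in S_1$, so a Taylor expansion in $I_2$ yields $|\omega_1(I)| \MP \kappa\varepsilon$ on $S^*(\kappa\varepsilon)$, while $(B.2)$ together with continuity gives $\omega_2(I) \geq \varpi/2$ on the same set. Hence, for every mode $k=(k_1,k_2)\in\Z^2$ with $k_2 \neq 0$ and $|k_1| \leq K$,
\[
|k\cdot\omega(I)| \;\geq\; |k_2|\,\omega_2(I) \;-\; |k_1|\,|\omega_1(I)| \;\geq\; \varpi/2 \;-\; C\kappa\varepsilon K \;\geq\; \varpi/4,
\]
provided I fix $K \EP \varepsilon^{-1}$ with a suitable constant. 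I then take $\tilde f^K$ to be the truncation of $\tilde f$ to those modes with $k_2 \neq 0$ and $|k_1| \leq K$, and solve the cohomological equation above by dividing Fourier coefficients by $i\,k\cdot\omega(I)$. Modes with $k_2=0$ already belong to $\bar f$, and modes with $|k_1|>K$ are sent to the remainder $\tilde f - \tilde f^K$.

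The remaining task is a regularity bookkeeping. Each $I$-derivative of $(k\cdot\omega(I))^{-1}$ produces a factor $|k|$, so controlling $|\chi_1|_{l+2}$ (needed for $\Phi$ to be $C^{l+1}$) reduces to bounding sums of the form $\sum_{|k|\leq K}|k|^{l+2}\,|\hat{\tilde f}_k(I)|$; in parallel, the truncation error satisfies $|\tilde f - \tilde f^K|_{l+1} \MP K^{-(r-l-1)}|f|_r$, which has to be $\MP \varepsilon$ so that $\varepsilon^{-1}(\tilde f - \tilde f^K)$ contributes $\MP 1$ to $f'$. Combined with $K \EP \varepsilon^{-1}$ and with the loss of one more derivative in each Poisson bracket entering $f'$, these requirements are precisely what force the threshold $r \geq 2(l+1)+3$. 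I expect the principal obstacle of the proof to be exactly this accounting: tracking simultaneously the loss of derivatives from the small-divisor inversion, from the Fourier truncation, and from the composition with the Hamiltonian flow of $\chi$, while maintaining that every constant depends only on $R$, $|S_1^*|$, $\varpi$ and $r$. The geometric content of $(B.1)$--$(B.2)$ enters only through the small-divisor estimate above; once the bookkeeping is in place, a term-by-term inspection of the Lie series shows that every contribution to $f'$ is $\MP 1$ in the $C^l$ norm, yielding the required estimate $|f'|_l \MP 1$.
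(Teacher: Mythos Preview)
Your proposal is correct and follows essentially the same approach as the paper: both build $\Phi$ as the time-one map of $X_{\varepsilon\chi}$, solve the truncated cohomological equation using the small-divisor bound $|k\cdot\omega(I)|\geq \varpi/2$ furnished by $(B.1)$--$(B.2)$ on $S^*(\kappa\varepsilon)$, pick $K\EP\varepsilon^{-1}$, and derive $|f'|_l\MP 1$ from the same derivative accounting that forces $r\geq 2(l+1)+3$. The only cosmetic difference is that the paper truncates by $|k|\leq K$ and uses integral Taylor remainders, whereas you truncate only in $|k_1|\leq K$ and sketch the Lie series; both choices work and yield the same threshold.
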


The proof of this proposition uses some elementary estimates which are recalled in the Appendix~\ref{app}.

\begin{proof}[Proof of Proposition~\ref{normal}]
First of all, since $\varepsilon \MP 1$, we can assume that $S^*(\kappa\varepsilon)$ is included in $B_R$. For a function $\chi : \mathcal{D}^*(\kappa\varepsilon) \rightarrow \R$ of class $C^{l+2}$ to be chosen below, the transformation $\Phi$ in the statement will be obtained as the time-one map of the Hamiltonian flow generated by $\varepsilon \chi$. Let $X_{\varepsilon\chi}$ be the Hamiltonian vector field generated by $\varepsilon\chi$, and $X_{\varepsilon\chi}^t$ the time-$t$ map. Assuming that $X_{\varepsilon\chi}^t$ is well-defined on $\mathcal{D}^*(\kappa\varepsilon/2)$ for $|t|\leq 1$, let $\Phi=X_{\varepsilon\chi}^1$. Using the relation
\[ \frac{d}{dt}\left(G \circ X_{\varepsilon\chi}^t\right)=\varepsilon\{G,\chi\}\circ X_{\varepsilon\chi}^t \]
for an arbitrary function $G$, and writing
\[ H \circ \Phi=h \circ \Phi + \varepsilon f \circ \Phi \]
we can apply Taylor's formula to the right-hand side of the above equality, at order two for the first term and at order one for the second term, to get
\begin{eqnarray}\label{dvpt} \nonumber
H \circ \Phi & = & h + \varepsilon \{h,\chi\}+\varepsilon^2 \int_{0}^{1}(1-t)\{\{h,\chi\},\chi\}\circ X_{\varepsilon\chi}^t dt + \varepsilon f + \varepsilon^2 \int_{0}^{1}\{f,\chi\}\circ X_{\varepsilon\chi}^t dt \\ \nonumber
& = & h+\varepsilon (\{h,\chi\}+f)+\varepsilon^2 \int_{0}^{1} \{(1-t)\{h,\chi\}+f,\chi\}\circ X_{\varepsilon\chi}^t dt \\ 
& = & h+\varepsilon \bar{f}+\varepsilon (\{h,\chi\}+f-\bar{f})+\varepsilon^2 \int_{0}^{1} \{(1-t)\{h,\chi\}+f,\chi\}\circ X_{\varepsilon\chi}^t dt
\end{eqnarray} 
where $\bar{f}$ is the function defined in the statement. It would be natural to choose $\chi$ to solve the equation $\{h,\chi\}+g=0$ where $g=f-\bar{f}$, which can be written again as $\{\chi,h\}=g$, but we will only solve this equation approximatively.

We expand $f$ in Fourier series with respect to the variables $\theta$:
\[ f(\theta,I)=\sum_{k\in\Z^2}f_k(I)e^{2\pi ik\cdot\theta}=\sum_{(k_1,k_2)\in\Z^2}f_{(k_1,k_2)}(I)e^{2\pi i(k_1\theta_1+k_2\theta_2)} \]
and for a parameter $K\geq 1$ to be chosen below, we write
\[ f(\theta,I)=f_K(\theta,I)+f^K(\theta,I)=\sum_{k\in\Z^2, \; |k|\leq K}f_k(I)e^{2\pi ik\cdot\theta}+\sum_{k\in\Z^2, \; |k|>K}f_k(I)e^{2\pi ik\cdot\theta}. \]
Instead of solving the equation $\{\chi,h\}=g$ with $g=f-\bar{f}$, we will actually choose $\chi$ to solve the equation $\{\chi,h\}=g_K$ where $g_K=f_K-\bar{f}_K$. Observe that
\[ \bar{f}(\theta_1,I)=\sum_{k_1\in \Z}f_{(k_1,0)}(I)e^{2\pi ik_1\theta_1} \]
and hence
\[ \bar{f}_K(\theta_1,I)=\sum_{k_1\in \Z, \; |k_1|\leq K}f_{(k_1,0)}(I)e^{2\pi ik_1\theta_1}. \]
It is then easy to check that the solution of $\{\chi,h\}=g_K$, which can be written again as
\begin{equation}\label{eqhomo}
\omega(I)\cdot\partial_\theta \chi(\theta,I)=g_K(\theta,I), \quad (\theta,I)\in \mathcal{D}^*(\kappa\varepsilon)
\end{equation}
is given by $\chi(\theta,I)=\sum_{k\in \Z^2, \; |k|\leq K}\chi_k(I)e^{2\pi i k\cdot\theta}$ where
\begin{equation}\label{sol}
\chi_k(I)=
\begin{cases}
(2\pi i k\cdot \omega(I))^{-1}f_k(I), & |k| \leq K, \; k_2 \neq 0 \\
0, & |k| \leq K, \; k_2=0. 
\end{cases}
\end{equation}
Since $h$ is $C^{l+3}$, $\chi$ is $C^{l+2}$ provided we can prove that $k\cdot \omega(I)$ is non-zero, for $|k|\leq K$ such that $k_2 \neq 0$ and $I \in S^*(\kappa\varepsilon)$. By definition, given $I \in S^*(\kappa\varepsilon)$ we can find $\tilde{I} \in S^*$ such that $|I-\tilde{I}|\leq \kappa\varepsilon$ and therefore $|\omega(I)-\omega(\tilde{I})|\leq \kappa\varepsilon$ since $h\in C_{1}^{l+3}(B_R)$. For any $k\in \Z^2$ such that $k_2 \neq 0$, by $(B.1)$ we have $\omega_1(\tilde{I})=\omega_1(\tilde{I}_1,0)=0$ (since $\tilde{I}=(\tilde{I}_1,0)\in S^* \subseteq S$). Moreover by $(B.2)$ we have $\omega_2(\tilde{I}_1,0) \geq \varpi$ (since $\tilde{I}_1 \in S_1^*$), hence 
\[|k\cdot \omega(\tilde{I})|=|k\cdot\omega(\tilde{I}_1,0)|=|k_1\omega_1(\tilde{I}_1,0)+k_2\omega_2(\tilde{I}_1,0)|=|k_2|\omega_2(\tilde{I}_1,0)\geq \varpi.\] 
It follows that for $k\in \Z^2$ such that $k_2 \neq 0$ and $|k|\leq K$, and for $I \in S^*(\kappa\varepsilon)$, we have
\begin{equation}\label{smalld}
|k\cdot \omega(I)|\geq  |k\cdot \omega(\tilde{I})|-|k||\omega(I)-\omega(\tilde{I})| \geq \varpi - K\kappa\varepsilon \geq \varpi/2
\end{equation}    
provided we define $K=\varpi(2\kappa\varepsilon)^{-1}\EP \varepsilon^{-1}$. Now $f \in C_1^r(\T^2 \times B_R)$, so an integration by parts gives that $|f_k|_j \MP |k|^{j-r}|f|_r \MP |k|^{j-r}$ for any $j\leq r$, and therefore using~\eqref{sol},~\eqref{smalld} and Leibniz formula (inequality~\eqref{eg2} 
of Appendix~\ref{app}), we obtain $|\chi_k|_{l+1} \MP |k|^{l+1-r}$. Since $r \geq 2(l+1)+3$, we can therefore bound the $C^{l+1}$ norm of $\chi$ independently of $K$ as
\[ |\chi|_{l+1} \MP \sum_{k\in \Z^2, \; |k|\leq K}|\chi_k|_{l+1}|k|^{l+1} \MP \sum_{k\in \Z^2, \; |k|\leq K}|k|^{l+1-r}|k|^{l+1} \MP \sum_{k\in \Z^2}|k|^{-3} \MP 1   \]
and so
\begin{equation}\label{gene}
|\varepsilon\chi|_{l+1} \leq \gamma\varepsilon, \quad \gamma\EP 1.
\end{equation}
It is easy to see that $\gamma$ is independent of $\kappa$, so we now choose $\kappa=2\gamma\EP 1$, and as $\varepsilon \MP 1$, we can apply Lemma~\ref{tech} of Appendix~\ref{app}: for all $|t|\leq 1$, 
$X^t_{\varepsilon\chi} : \mathcal{D}^*(\kappa\varepsilon/2) \rightarrow 
\mathcal{D}^*(\kappa\varepsilon)$ is a well-defined symplectic embedding of 
class $C^{l+1}$ with the estimates
\begin{equation}\label{esttech} 
|X_{\varepsilon\chi}^t-\mathrm{Id}|_{0} 
\leq \kappa\varepsilon/2, \quad 
|X_{\varepsilon\chi}^t|_{l} \MP 1. 
\end{equation}
In particular, the first estimate of~\eqref{esttech} gives
\[ |\Phi-\mathrm{Id}|_{0} \MP \varepsilon.\]

Now from the equalities~\eqref{dvpt} and~\eqref{eqhomo} we can write
\[ H \circ \Phi= h+\varepsilon \bar{f}+\varepsilon f^K - \varepsilon \bar{f}^K+\varepsilon^2 \int_{0}^{1} \{(t-1)g_K+f,\chi\}\circ X_{\varepsilon\chi}^t dt \]
so that, if we set
\[ f'=\varepsilon^{-1}f^K-\varepsilon^{-1}\bar{f}^K+\int_{0}^{1} \{(t-1)g_K+f,\chi\}\circ X_{\varepsilon\chi}^t dt=\varepsilon^{-1}f^K-\varepsilon^{-1}\bar{f}^K+R, \]
then
\[ H \circ \Phi = h+\varepsilon \bar{f} + \varepsilon^2 f'. \]
It remains to estimate $f'$. Using the fact that $|f_k|_j \MP |k|^{j-r}|f|_r \MP |k|^{j-r}$ for any $j\leq r$, one easily obtain
\begin{equation}\label{rem1}
\varepsilon^{-1}|f^K|_l \MP \varepsilon^{-1}K^{l+2-r} \MP 1
\end{equation}
by definition of $K$. Similarly
\begin{equation}\label{rem2}
\varepsilon^{-1}|\bar{f}^K|_l \MP 1.
\end{equation}
Then we have
\begin{eqnarray}\label{rem3} \nonumber
|R|_{l} & \MP &  |\{(t-1)g_K+f,\chi\}|_{l}|X_{\varepsilon\chi}^t|_{l}^{l} \\ \nonumber
& \MP & |\{(t-1)g_K+f,\chi\}|_{l} \\ \nonumber
& \MP &  |(t-1)g_K+f|_{l+1}|\chi|_{l+1} \\
& \MP & 1 
\end{eqnarray}
where we have used Faa di Bruno formula (inequality~\eqref{Faa} of Appendix~\ref{app}), the last part of~\eqref{esttech}, the inequality~\eqref{poisson} of Appendix~\ref{app} and the fact that $|g_K|_{l+1} \MP 1$, $|f|_{l+1} \MP 1$ and $|\chi|_{l+1} \MP 1$. The estimates~\eqref{rem1},~\eqref{rem2} and~\eqref{rem3} implies that
\[ |f'|_l \MP 1 \]
which concludes the proof. 
\end{proof}

\subsubsection{Two-steps normal form}

\begin{proposition}\label{normal2}
Let $H=h+\varepsilon f$ be defined on $\T^2 \times B_R$, $l\geq 1$ and $l'\geq 1$ integers, and assume that $h \in C_{1}^{l+3}(B_R)$ 
satisfies $(B.1)$ and $(B.2)$ and 
$f \in C_1^r(\T^2 \times B_R)$, for $r\geq 2(l+1)+3$ and $l \geq 2(l'+1)+3$. 
Assume that $\varepsilon \MP 1$. Then there exists 
a symplectic embedding 
\[
\Phi : \mathcal{D}^*(\kappa \varepsilon/4) \rightarrow 
\mathcal{D}^*(\kappa \varepsilon) 
\]
of class $C^{l'+1}$ such that
\[ 
H \circ \Phi = 
h+ \varepsilon \bar{f}+ \varepsilon^2 \bar{f}'+\varepsilon^3 f'', \quad 
\bar{f}(\theta_1,I)=\int_{\T}f(\theta_1,\theta_2,I)d\theta_2, \quad \bar{f}'(\theta_1,I)=\int_{\T}f'(\theta_1,\theta_2,I)d\theta_2 
\]
and we have the following estimates
\[ |\Phi-\mathrm{Id}|_{0} \leq 3\kappa\varepsilon/4, \quad |f'|_{l} \MP 1, \quad |f''|_{l'} \MP 1. \] 
\end{proposition}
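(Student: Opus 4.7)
The plan is to compose Proposition~\ref{normal} with a second averaging step that targets the $\theta_2$-dependence of the new remainder $f'$. Applying Proposition~\ref{normal} first with smoothness index $l$ (legitimate because $r \geq 2(l+1)+3$) produces a $C^{l+1}$ symplectic embedding $\Phi_1 : \mathcal{D}^*(\kappa\varepsilon/2) \to \mathcal{D}^*(\kappa\varepsilon)$ with
\[ H \circ \Phi_1 = h + \varepsilon \bar{f} + \varepsilon^2 f', \qquad |\Phi_1 - \mathrm{Id}|_0 \leq \kappa\varepsilon/2, \qquad |f'|_l \MP 1. \]

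For the second step, set $\bar{f}'(\theta_1, I) = \int_\T f'(\theta_1, \theta_2, I)\,d\theta_2$ and $g' = f' - \bar{f}'$, and define $\Phi_2 = X^1_{\varepsilon^2 \chi'}$, where $\chi'$ solves the truncated homological equation
\[ \omega(I) \cdot \partial_\theta \chi'(\theta, I) = g'_{K'}(\theta, I), \qquad K' = \varpi(2\kappa\varepsilon)^{-1} \EP \varepsilon^{-1}, \]
whose explicit solution is $\chi'_k = (2\pi i k \cdot \omega)^{-1}f'_k$ for $|k| \leq K'$, $k_2 \neq 0$, and zero otherwise. The small divisor bound $|k \cdot \omega(I)| \geq \varpi/2$ on $\mathcal{D}^*(\kappa\varepsilon)$ carries over verbatim from Proposition~\ref{normal}. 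Since $|f'|_l \MP 1$ yields $|f'_k|_j \MP |k|^{j-l}$ for $j \leq l$, one has $|\chi'_k|_{l'+1} \MP |k|^{l'+1-l}$, and the hypothesis $l \geq 2(l'+1)+3$ gives
\[ |\chi'|_{l'+1} \MP \sum_{k \in \Z^2} |k|^{2(l'+1)-l} \MP 1 \]
independently of $K'$. Consequently $|\varepsilon^2 \chi'|_{l'+1} \MP \varepsilon^2$, and Lemma~\ref{tech} delivers a $C^{l'+1}$ symplectic embedding $\Phi_2 : \mathcal{D}^*(\kappa\varepsilon/4) \to \mathcal{D}^*(\kappa\varepsilon/2)$ with $|\Phi_2 - \mathrm{Id}|_0 \MP \varepsilon^2 \leq \kappa\varepsilon/4$ for $\varepsilon$ sufficiently small.

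Setting $\Phi = \Phi_1 \circ \Phi_2$ gives a $C^{l'+1}$ embedding $\mathcal{D}^*(\kappa\varepsilon/4) \to \mathcal{D}^*(\kappa\varepsilon)$ with $|\Phi - \mathrm{Id}|_0 \leq 3\kappa\varepsilon/4$. Expanding $(H \circ \Phi_1) \circ \Phi_2$ with Taylor's formula applied at order two to $h$ and at order one to $\varepsilon \bar{f}$ and $\varepsilon^2 f'$, exactly in the style of~\eqref{dvpt}, and substituting the homological equation, one obtains
\[ H \circ \Phi = h + \varepsilon \bar{f} + \varepsilon^2 \bar{f}' + \varepsilon^2\bigl((f')^{K'} - (\bar{f}')^{K'}\bigr) + \varepsilon^3 R, \]
where $R$ collects $\int_0^1 \{\bar{f}, \chi'\} \circ X^t_{\varepsilon^2 \chi'}\,dt$ together with the higher-order Taylor remainders of $h$ and $f'$ (which come with an extra $\varepsilon$). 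The Fourier truncation errors are bounded in $C^{l'}$ by $(K')^{l'+2-l} \MP \varepsilon^{l-l'-2}$, so after division by $\varepsilon$ they contribute at size $\MP 1$ to $f''$ (this only needs $l \geq l'+3$, which is weaker than what we have); the term $R$ is bounded in $C^{l'}$ by Faà di Bruno together with the Poisson bracket estimate and~\eqref{esttech}, exactly as in~\eqref{rem3}. Packaging everything into $\varepsilon^3 f''$ yields $|f''|_{l'} \MP 1$. The only genuine obstacle, beyond the routine bookkeeping, is regularity: because $f'$ inherits only $C^l$ regularity (not $C^r$), the summation controlling $|\chi'|_{l'+1}$ forces the extra hypothesis $l \geq 2(l'+1)+3$.
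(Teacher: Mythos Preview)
Your proof is correct and follows essentially the same approach as the paper's own proof: apply Proposition~\ref{normal} once to obtain $\Phi_1$ and the $C^l$-bounded remainder $f'$, then build $\Phi_2$ as the time-one map of the flow of $\varepsilon^2\chi'$ where $\chi'$ solves the truncated homological equation for $f'-\bar{f}'$, and compose. The paper's proof omits repeating the estimates, while you spell out the Fourier-tail and $R$ bounds more explicitly, but the argument, the choice of truncation $K' \EP \varepsilon^{-1}$, the use of $l \geq 2(l'+1)+3$ to make $|\chi'|_{l'+1}$ summable, and the final bookkeeping are the same.
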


The proof of Proposition~\ref{normal2} consists essentially of applying twice Proposition~\ref{normal}; in particular the estimates are analogous so they will not be repeated below.

\begin{proof}[Proof of Proposition~\ref{normal2}]
First of all, the assumptions allow us to apply Proposition~\ref{normal}: there exists 
a symplectic embedding 
\[
\Phi_1 : \mathcal{D}^*(\kappa \varepsilon/2) \rightarrow 
\mathcal{D}^*(\kappa \varepsilon) 
\]
of class $C^{l+1}$ such that
\[ 
H_1=H \circ \Phi_1 = 
h+ \varepsilon \bar{f}+ \varepsilon^2 f', \quad 
\bar{f}(\theta_1,I)=\int_{\T}f(\theta_1,\theta_2,I)d\theta_2 
\]
and we have the following estimates
\[ |\Phi_1-\mathrm{Id}|_{0} \leq \kappa \varepsilon/2, \quad |f'|_{l} \MP 1. \]  
Now consider the Hamiltonian $H_1=h+ \varepsilon \bar{f}+ \varepsilon^2 f'$ defined on $\mathcal{D}^*(\kappa \varepsilon/2)$ and of class $C^{l+1}$. The transformation $\Phi$ in the statement will be obtained as a composition $\Phi=\Phi_1 \circ \Phi_2$, where $\Phi_2$ will be the time-one map of the Hamiltonian flow generated by $\varepsilon^2 \chi$, for some function $\chi : \mathcal{D}^*(\kappa \varepsilon/2) \rightarrow \R$ of class $C^{l'+2}$ to be determined. As before, we write
\[ H_1 \circ \Phi_2 =h \circ \Phi_2 + (\varepsilon \bar{f}+\varepsilon^2 f') \circ \Phi_2 \]
and by a Taylor expansion we have
\begin{eqnarray*}
H_1 \circ \Phi_2 & = & h + \varepsilon^2 \{h,\chi\}+\varepsilon^4 \int_{0}^{1}(1-t)\{\{h,\chi\},\chi\}\circ X_{\varepsilon\chi}^t dt \\
& + & \varepsilon \bar{f} + \varepsilon^2 f'+ \varepsilon^3 \int_{0}^{1}\{\bar{f}+\varepsilon f',\chi\}\circ X_{\varepsilon\chi}^t dt \\ 
& = & h+\varepsilon \bar{f}+\varepsilon^2 \bar{f}'+\varepsilon^2 (\{h,\chi\}+f'-\bar{f}')+\varepsilon^3 R.
\end{eqnarray*} 
As before also, we will choose $\chi$ to solve the equation $\{\chi,h\}=g'_K$ where $g_K=f'_K-\bar{f}'_K$ and $K \EP \varepsilon^{-1}$. Note that $H_1$ is of class $C^{l+1}$, but we have a bound only on the $C^l$ norm of $f'$, and as $l \geq 2(l'+1)+3$, it can be proved that $\chi$ is $C^{l'+2}$ with
\[ |\varepsilon^2 \chi|_{l'+1} \MP \varepsilon^2 \]
and hence
\[ |\Phi_2-\mathrm{Id}|_{0} \MP \varepsilon^2 \leq \kappa\varepsilon/4 \]
since $\varepsilon \MP 1$. Therefore $\Phi_2$ is well-defined on $\mathcal{D}^*(\kappa \varepsilon/4)$ and of class $C^{l'+1}$, and so $\Phi : \mathcal{D}^*(\kappa \varepsilon/4) \rightarrow \mathcal{D}^*(\kappa \varepsilon)$ is of class $C^{l'+1}$ and satisfies 
\[|\Phi-\mathrm{Id}|_{0} \leq |\Phi_1-\mathrm{Id}|_{0} + |\Phi_2-\mathrm{Id}|_{0} \leq  3\kappa\varepsilon/4.\] 
If we set
\[ f''=\varepsilon^{-1} f'^{K}-\varepsilon^{-1} \bar{f}'^{K}+R, \]
then it can be proved that $|f''|_{l'} \MP 1$ and we have
\[ H_1 \circ \Phi_2= h+\varepsilon \bar{f}+\varepsilon^2 \bar{f}'+\varepsilon^3 f''\]
hence
\[ H \circ \Phi=h+\varepsilon \bar{f}+\varepsilon^2 \bar{f}'+\varepsilon^3 f''. \]
This concludes the proof.
\end{proof}

\subsection{Proof of Theorem~\ref{thm2} and Theorem~\ref{thm2-potential}}\label{s23}

\subsubsection{Proof of Theorem~\ref{thm2}}

The proof of Theorem~\ref{thm2} is now a consequence of the normal form Proposition~\ref{normal}. Since the latter is defined on a domain which contains the segment $S^*$, whose length is independent of $\varepsilon$, it will be possible to prove the statement of Theorem~\ref{thm2} for the normal form $H \circ \Phi$ by analyzing directly the equation of motions, and using the fact that $\Phi$ is $\varepsilon$-close to the identity, we will prove that the statement remains true for $H$.

\begin{proof}[Proof of Theorem~\ref{thm2}]
Recall that we are considering $H=h+\varepsilon f$ defined on $\T^2 \times B_R$, with $h \in C_1^4(B_R)$ satisfying $(B.1)$ and $(B.2)$ and $f \in \mathcal{F}^7$, so we can apply Proposition~\ref{normal} with $l=1$: there exist positive constants $\varepsilon_0'$ and $C'$ depending only on $R$, the length of $S_1^*$ and $\varpi$ such that if $\varepsilon \leq \varepsilon_0'$, there exists 
a symplectic embedding 
\[
\Phi : \mathcal{D}^*(\kappa \varepsilon/2) \rightarrow 
\mathcal{D}^*(\kappa \varepsilon) 
\]
of class $C^2$ such that
\[ 
H \circ \Phi = 
h+ \varepsilon \bar{f}+ \varepsilon^2 f', \quad 
\bar{f}(\theta_1,I)=\int_{\T}f(\theta_1,\theta_2,I)d\theta_2 
\]
and we have the following estimates
\begin{equation}\label{estfinal}
|\Phi-\mathrm{Id}|_{0} \leq \kappa\varepsilon/2, \quad |f'|_{1} \leq C' 
\end{equation}
where $S^*(\kappa\varepsilon) \subseteq B_R$ and $\mathcal{D}^*(\kappa\varepsilon) \subseteq \T^2 \times B_R$ have been defined in \S\ref{s22}.

Let us consider the Hamiltonian $\tilde{H}=H \circ \Phi$ defined on $\mathcal{D}^*(\kappa\varepsilon/2)$, and we shall write $\Phi(\tilde{\theta},\tilde{I})=(\theta,I)$. Since $f \in \mathcal{F}^7$, there exist $I^*=(I_1^*,0)$ in the interior of $S^*$ and $\theta_1^* \in \T$ such that
\begin{equation}\label{drift}
|\partial_{\tilde{\theta_1}} \bar{f}^*(\theta_1^*)|=|\partial_{\tilde{\theta_1}}\bar{f}(\theta_1^*,I^*)|\geq \lambda.
\end{equation}     
Note that necessarily $\lambda\leq 1$ since $\bar{f}\in C^7_1(\T^2 \times B_R)$, and recall that $\delta^{*}$ is the distance of $I^*$ to the boundary of $S^*$. Since $\tilde{H}$ is $C^2$, we can consider a solution $(\tilde{\theta}(t),\tilde{I}(t))$ of the system defined by $\tilde{H}$ with an initial condition $(\tilde{\theta}(0),\tilde{I}(0))$ such that $\tilde{I}(0)=I^*$, $\tilde{\theta_1}(0)=\theta_1^*$ and $\tilde{\theta_2}(0) \in \T$ is arbitrary: we have the equations
\begin{equation}\label{mouve}
\begin{cases}
\frac{d}{dt}\tilde{I}_1(t)=-\partial_{\tilde{\theta_1}} \tilde{H}(\tilde{\theta}(t),\tilde{I}(t))=-\varepsilon\partial_{\tilde{\theta_1}} \bar{f}(\tilde{\theta}_1(t),\tilde{I}(t)) -\varepsilon^2\partial_{\tilde{\theta_1}} f'(\tilde{\theta}(t),\tilde{I}(t)), \\
\frac{d}{dt}\tilde{I}_2(t)=-\partial_{\tilde{\theta_2}} \tilde{H}(\tilde{\theta}(t),\tilde{I}(t))=-\varepsilon^2\partial_{\tilde{\theta_2}} f'(\tilde{\theta}(t),\tilde{I}(t)), \\
\frac{d}{dt}\tilde{\theta}_1(t)=\partial_{\tilde{I_1}} \tilde{H}(\tilde{\theta}(t),\tilde{I}(t))=\omega_1(I(t))+\varepsilon\partial_{\tilde{I_1}} \bar{f}(\tilde{\theta}_1(t),\tilde{I}(t))+\varepsilon^2\partial_{\tilde{I_1}} f'(\tilde{\theta}(t),\tilde{I}(t)), \\
\frac{d}{dt}\tilde{\theta}_2(t)=\partial_{\tilde{I_2}} \tilde{H}(\tilde{\theta}(t),\tilde{I}(t))=\omega_2(I(t))+\varepsilon\partial_{\tilde{I_2}} \bar{f}(\tilde{\theta}_1(t),\tilde{I}(t))+\varepsilon^2\partial_{\tilde{I_2}} f'(\tilde{\theta}(t),\tilde{I}(t)),
\end{cases}
\end{equation}
since $\bar{f}$ is independent of the second angular variables. For a positive constant $\delta$ to be chosen later in terms of $\lambda$ and $\delta^*$, we let $\tau=\delta\varepsilon^{-1}$. From the second equation of~\eqref{mouve} and the first estimate of~\eqref{estfinal}, we get
\[ |\tilde{I}_2(t)-\tilde{I}_2(0)|=|\tilde{I}_2(t)|\leq C'\varepsilon\delta, \quad |t|\leq \tau, \]
which makes sense provided that $|\tilde{I}_2(t)-\tilde{I}_2(0)| \leq \kappa\varepsilon/2$ for $|t|\leq\tau$, and this is satisfied if $C'\delta\leq \kappa/2$, that is $\delta \leq \kappa(2C')^{-1}$. Now for $|t|\leq \tau$, recalling that $\omega_1(\tilde{I}_1(t),\tilde{I}_2(0))=0$ and $h \in C^4_1(B_R)$, we have
\[ |\omega_1(\tilde{I}(t))|=|\omega_1(\tilde{I}_1(t),\tilde{I}_2(t))|=|\omega_1(\tilde{I}_1(t),\tilde{I}_2(t))-\omega_1(\tilde{I}_1(t),\tilde{I}_2(0))|\leq |\tilde{I}_2(t)-\tilde{I}_2(0)| \leq C'\varepsilon\delta.  \]
Therefore, from the third equation of~\eqref{mouve}, the second estimate of~\eqref{estfinal} and the fact that $\bar{f} \in C^7_1(\T \times B_R)$, we have
\[ \left|\frac{d}{dt}\tilde{\theta}_1(t)\right|\leq C'\varepsilon\delta+\varepsilon+C'\varepsilon^2=(C'\delta+1+C'\varepsilon)\varepsilon\leq C\varepsilon, \quad |t|\leq \tau \]
with $C=1+2C'$, provided $\delta\leq 1$ and since $\varepsilon \leq 1$. This implies that
\begin{equation}\label{vart}
|\tilde{\theta}_1(t)-\theta_1^*|=|\tilde{\theta}_1(t)-\tilde{\theta}_1(0)|\leq C\delta, \quad |t|\leq \tau. 
\end{equation}
Moreover, recall that $|\tilde{I}_2(t)-\tilde{I}_2(0)| \leq C'\varepsilon\delta \leq C\delta$ for $|t|\leq \tau$ by the definition of $C$ and since $\varepsilon \leq 1$, and from the first equation of~\eqref{mouve}, the first estimate of~\eqref{estfinal} and the fact that $\bar{f} \in C^7_1(\T \times B_R)$, we also have 
\[ |\tilde{I}_1(t)-I^*_1|=|\tilde{I}_1(t)-\tilde{I}_1(0)|\leq \delta +C'\varepsilon\delta, \quad |t|\leq \tau, \]
which makes sense if $\delta \leq \delta^*$ as this implies that $|\tilde{I}_1(t)-I^*_1| \leq \delta^*+\kappa\varepsilon/2$. In particular 
\[ |\tilde{I}_1(t)-I^*_1|=|\tilde{I}_1(t)-\tilde{I}_1(0)|\leq C\delta, \quad |t|\leq \tau, \]
by the definition of $C$ and since $\varepsilon\leq 1$ and therefore
\begin{equation}\label{vari}
|\tilde{I}(t)-I^*|=|\tilde{I}(t)-\tilde{I}(0)| \leq C\delta, \quad |t|\leq \tau. 
\end{equation} 
Using the fact that $\bar{f} \in C^7_1(\T \times B_R)$, from~\eqref{vart} and~\eqref{vari} we obtain
\begin{equation}\label{eqvar}
|\partial_{\tilde{\theta}_1}\bar{f}(\tilde{\theta}_1(t),\tilde{I}(t))-\partial_{\tilde{\theta}_1}\bar{f}(\theta_1^*,I^*)| \leq C\delta, \quad |t|\leq \tau. 
\end{equation} 
We eventually choose $\delta=\min\{\lambda(4C)^{-1},\delta^*\}$, and hence $\tau=\delta\varepsilon^{-1}\leq\lambda(4C)^{-1}\varepsilon^{-1}$. We have to make sure that $\delta\leq 1$ and $\delta \leq \min\{\delta^*,\kappa(2C')^{-1}\}$. The first requirement is obviously satisfied since $C\geq 1$ and $\lambda \leq 1$, and hence $\delta \leq \lambda(4C)^{-1} \leq 1$. For the second one, which reduces to $\delta \leq \kappa(2C')^{-1}$, note that $\varpi \leq 1$ since $h \in C^4_1(B_R)$, so $\kappa \geq 1$ hence $\lambda \leq 1 \leq 2\kappa $ and this implies that $\delta \leq \kappa(2C')^{-1}$ as $C \geq C'$. Now from~\eqref{drift},~\eqref{eqvar} and the definition of $\delta$, we have for all $|t|\leq \tau$,
\[ |\varepsilon\partial_{\tilde{\theta}_1}\bar{f}(\tilde{\theta}_1(t),\tilde{I}(t))|\geq |\varepsilon\partial_{\tilde{\theta}_1}\bar{f}(\theta_1^*,I^*)|-|\varepsilon\partial_{\tilde{\theta}_1}\bar{f}(\tilde{\theta}_1(t),\tilde{I}(t))-\varepsilon\partial_{\tilde{\theta}_1}\bar{f}(\theta_1^*,I^*)|\geq \varepsilon\lambda-C\varepsilon\delta \geq 3\varepsilon\lambda/4. \]
Moreover, if we assume that $\varepsilon \leq (4C')^{-1}\lambda$, then from the second estimate of~\eqref{estfinal}, we have
\[ |\varepsilon^2\partial_{\tilde{\theta_1}} f'(\tilde{\theta}(t),\tilde{I}(t))|\leq C'\varepsilon^2 \leq \varepsilon\lambda/4, \quad |t|\leq \tau, \]
and this gives, as before,
\[ |\varepsilon\partial_{\tilde{\theta}_1}\bar{f}(\tilde{\theta}_1(t),\tilde{I}(t))+\varepsilon^2\partial_{\tilde{\theta_1}} f'(\tilde{\theta}(t),\tilde{I}(t))|\geq 3\varepsilon\lambda/4 - \varepsilon\lambda/4=\varepsilon\lambda/2, \quad |t|\leq \tau. \]
Now from the first equation of~\eqref{mouve}, we obtain
\[ \left|\frac{d}{dt}\tilde{I}_1(t)\right|\geq \varepsilon\lambda/2, \quad |t|\leq \tau, \]
which eventually gives
\[ |\tilde{I}_1(\tau)-\tilde{I}_1(0)|\geq \tau \varepsilon\lambda/2\geq 2C \delta^2. \]

Coming back to the original Hamiltonian, $\Phi(\tilde{\theta}(t),\tilde{I}(t))=(\theta(t),I(t))$ is a solution of the Hamiltonian $H$, and from the first estimate of~\eqref{estfinal}, we have
\[ |\tilde{I}_1(t)-I_1(t)| \leq \kappa\varepsilon/2, \quad |\tilde{I}_2(t)-I_2(t)| \leq \kappa\varepsilon/2 \]
as long as $\tilde{I}(t)\in S^*(\kappa\varepsilon/2)$, so in particular
\[ |\tilde{I}_1(0)-I_1(0)| \leq \kappa\varepsilon/2, \quad |\tilde{I}_1(\tau)-I_1(\tau)| \leq \kappa\varepsilon/2. \]
Assuming that $\varepsilon \leq C\delta^2/\kappa$, this gives
\[ |I_1(\tau)-I_1(0)|\geq |\tilde{I}_1(\tau)-\tilde{I}_1(0)|-|\tilde{I}_1(\tau)-I_1(\tau)|-|\tilde{I}_1(0)-I_1(0)| \geq 2C \delta^2 -\kappa\varepsilon \geq C\delta^2.   \]
Summing up, if we define
\[ \varepsilon_0=\min\{\varepsilon_0',\lambda(4C')^{-1},C\delta^2\kappa^{-1}\} \]
and $c=\kappa$, then for $\varepsilon\leq\varepsilon_0$, $\delta=\min\{\delta^*,\lambda(4C)^{-1}\}$ and $\tau=\delta\varepsilon^{-1}$, the Hamiltonian $H$ has a solution $(\theta(t),I(t))$ for which 
\[ |I_1(0)-I_1^*| \leq c\varepsilon, \quad |I_1(\tau)-I_1(0)|\geq \lambda^2(16C)^{-1}=C\delta^2.  \]
Moreover, for all $t \in [0,\tau]$, 
\[ |I_2(t)|\leq  c\varepsilon, \quad d(I_1(t),S_1^*) \leq c\varepsilon.\] 
This was the statement to prove.
\end{proof}

\subsubsection{Proof of Theorem~\ref{thm2-potential}}

The proof of Theorem~\ref{thm2-potential} is based on the normal form Proposition~\ref{normal2}, and is similar to the proof of Theorem~\ref{thm} so we will not repeat several details. The only difference is that the perturbation $f$ (and hence its average $\bar{f}$) is action independent so that the action dependence in the normal form is of order $\varepsilon^2$, and moreover the dependence on $\theta_2$ in the normal form is of order $\varepsilon^3$. These facts will be used to control the solution on a longer time $\tau$, and this will eventually give a larger drift of the action variable $I_1$.

\begin{proof}[Proof of Theorem~\ref{thm2-potential}]
Recall that we are considering $H=h+\varepsilon f$ defined on $\T^2 \times B_R$, with $h \in C_1^{10}(B_R)$ satisfying $(B.1)$ and $(B.2)$ and $f \in \mathcal{G}^{19}$, so we can apply Proposition~\ref{normal} with $l=7$ and $l'=1$: there exist positive constants $\varepsilon_0''$, $C'$ and $C''$ depending only on $R$, the length of $S_1^*$ and $\varpi$ such that if $\varepsilon \leq \varepsilon_0'$, there exists 
a symplectic embedding 
\[
\Phi : \mathcal{D}^*(\kappa \varepsilon/4) \rightarrow 
\mathcal{D}^*(\kappa \varepsilon) 
\]
of class $C^{2}$ such that
\[ 
H \circ \Phi = 
h+ \varepsilon \bar{f}+ \varepsilon^2 \bar{f}'+\varepsilon^3 f'', \quad 
\bar{f}(\theta_1,I)=\int_{\T}f(\theta_1,\theta_2,I)d\theta_2, \quad \bar{f}'(\theta_1,I)=\int_{\T}f'(\theta_1,\theta_2,I)d\theta_2 
\]
and we have the following estimates
\[ |\Phi-\mathrm{Id}|_{0} \leq 3\kappa\varepsilon/4, \quad |f'|_{7} \leq C', \quad |f''|_{1} \leq C''. \] 
Let $I' \in S^*$ and $I'' \in S^*$, without loss of generality we may assume that $I'_1 \leq I_1''$ and recall that $\rho=|I'-I''|=I_1''-I_1'$. As before, we consider a solution $(\tilde{\theta}(t),\tilde{I}(t))$ of the system defined by $\tilde{H}=H \circ \Phi$ with an initial condition $(\tilde{\theta}(0),\tilde{I}(0))$ such that $\tilde{I}(0)=I'$, $\tilde{\theta_1}(0)=\theta_1^*$ and $\tilde{\theta_2}(0) \in \T$ is arbitrary, and for a positive constant $\delta$ to be chosen later in terms of $\lambda$ and $\rho$, we let $\tau'=\delta\varepsilon^{-1}$. Moreover, we define
\[ \tilde{\tau}=\inf \{t \geq 0, \; | \; |\tilde{I}_1(t)-I_1'|\geq\rho\}\in \R^+ \cup \{+\infty\} \]
and we let $\tau=\min\{\tau',\tilde{\tau}\} \in \R^+$. Reversing time if necessary, we may assume that $I'_1 \leq \tilde{I}_1(\tau)$. Note that in the normal form, only $f''$ depends on $\theta_2$, so from the equations of motion of $\tilde{H}$ and the estimate on $f''$, 
\[ |\tilde{I}_2(t)-\tilde{I}_2(0)|=|\tilde{I}_2(t)|\leq C'\varepsilon^2\delta, \quad |t|\leq \tau, \]
which makes sense provided that $|\tilde{I}_2(t)-\tilde{I}_2(0)| \leq \kappa\varepsilon/4$ for $|t|\leq\tau$, and this is satisfied if $C'\delta\varepsilon\leq \kappa/4$, that is $\varepsilon \leq \kappa(4\delta C')^{-1}$. This implies, as before, that 
\[ |\omega_1(\tilde{I}(t))| \leq |\tilde{I}_2(t)-\tilde{I}_2(0)| \leq C'\varepsilon^2\delta  \]
and hence, using the fact that $\bar{f}$ is action-independent,
\[ \left|\frac{d}{dt}\tilde{\theta}_1(t)\right|\leq C''\varepsilon^2\delta+C'\varepsilon^2+C''\varepsilon^3=(C''\delta+C'+C''\varepsilon^{2})
\varepsilon^{2}\leq C\varepsilon^{2}, \quad |t|\leq \tau \]
with $C=C''\delta+C'+C''$, since $\varepsilon \leq 1$. Hence 
\begin{equation}
|\tilde{\theta}_1(t)-\theta_1^*|=|\tilde{\theta}_1(t)-\tilde{\theta}_1(0)|\leq C\varepsilon\delta, \quad |t|\leq \tau. 
\end{equation}
Since $\bar{f} \in C^{19}_1(\T \times B_R)$, we obtain from the last estimate that
\begin{equation}\label{eqvar2}
|\partial_{\tilde{\theta}_1}\bar{f}(\tilde{\theta}_1(t))-\partial_{\tilde{\theta}_1}\bar{f}(\theta_1^*)| \leq C\varepsilon\delta, \quad |t|\leq \tau. 
\end{equation} 
Now recall that since $f \in \mathcal{G}^{19}$, we have
\[ |\varepsilon\partial_{\tilde{\theta}_1}\bar{f}(\theta_1^*)| \geq \varepsilon\lambda \] 
and assuming that $\varepsilon \leq \lambda (4C\delta)^{-1}$, we have $C\varepsilon\delta \leq\lambda/4$ so that the last estimate together with~\eqref{eqvar2} implies that
\[ |\varepsilon\partial_{\tilde{\theta}_1}\bar{f}(\tilde{\theta}_1(t))| \geq 3\varepsilon\lambda/4, \quad 0 \leq t \leq \tau. \] 
Moreover, if we assume that $\varepsilon \leq \lambda(8C')^{-1}$ and $\varepsilon \leq \lambda^{1/2}(8C'')^{-1/2}$, then
\[ |\varepsilon^2\partial_{\tilde{\theta_1}} \bar{f}'(\tilde{\theta}(t),\tilde{I}(t))|\leq C'\varepsilon^2 \leq \varepsilon\lambda/8, \quad |t|\leq \tau, \]
and
\[ |\varepsilon^3\partial_{\tilde{\theta_1}} f''(\tilde{\theta}(t),\tilde{I}(t))|\leq C''\varepsilon^3 \leq \varepsilon\lambda/8, \quad |t|\leq \tau\]
which gives
\[ |\varepsilon\partial_{\tilde{\theta}_1}\bar{f}(\tilde{\theta}_1(t))+\varepsilon^2\partial_{\tilde{\theta_1}} \bar{f}'(\tilde{\theta}(t),\tilde{I}(t))+\varepsilon^3\partial_{\tilde{\theta_1}} f''(\tilde{\theta}(t),\tilde{I}(t))|\geq 3\varepsilon\lambda/4 - \varepsilon\lambda/8-\varepsilon\lambda/8=\varepsilon\lambda/2, \quad |t|\leq \tau. \]
From the equations of motion of $\tilde{H}$, this implies
\[ \left|\frac{d}{dt}\tilde{I}_1(t)\right|\geq \varepsilon\lambda/2, \quad |t|\leq \tau, \]
and hence
\[ |\tilde{I}_1(\tau)-\tilde{I}_1(0)|=\tilde{I}_1(\tau)-\tilde{I}_1(0)\geq \tau \varepsilon\lambda/2. \]
We have imposed no restriction on the choice of $\delta$, so we eventually choose $\delta=2\rho\lambda^{-1}$. If $\tau=\tilde{\tau}$, then by definition of $\tilde{\tau}$ we have $\tilde{I}_1(\tau)-I'=\rho$, so $\tilde{I}(\tau)=I''$ and we have $\tau \leq\tau'=\delta\varepsilon^{-1}$. If $\tau=\tau'=\delta\varepsilon^{-1}$, then from the last estimate and the choice of $\delta$ we obtain
\[ \tilde{I}_1(\tau)-\tilde{I}_1(0)\geq \delta\lambda/2=\rho \]
which implies that $\tilde{I}_1(\tau)-\tilde{I}_1(0)=\rho$ and $\tau = \tilde{\tau}$, and hence $\tilde{I}(\tau)=I''$. Therefore, in any cases, $\tilde{\tau}$ is finite, and we have $\tilde{I}(\tau)=I''$ for $\tau \leq \delta\varepsilon^{-1}$.  
Coming back to the original Hamiltonian, $\Phi(\tilde{\theta}(t),\tilde{I}(t))=(\theta(t),I(t))$ is a solution of the Hamiltonian $H$, and using the estimate on $\Phi$, we have
\[ |\tilde{I}_1(t)-I_1(t)| \leq 3\kappa\varepsilon/4, \quad |\tilde{I}_2(t)-I_2(t)| \leq 3\kappa\varepsilon/4 \]
as long as $\tilde{I}(t)\in S^*(\kappa\varepsilon/2)$, so in particular
\[ |\tilde{I}_1(0)-I_1(0)| \leq 3\kappa\varepsilon/4, \quad |\tilde{I}_1(\tau)-I_1(\tau)| \leq 3\kappa\varepsilon/4. \]
Summing up, if we define
\[ \varepsilon_0=\min\{\varepsilon_0'',\kappa(4C'\delta)^{-1},\lambda (4C\delta)^{-1},\lambda(8C')^{-1},\lambda^{1/2}(8C'')^{-1/2}\} \]
and $c=\kappa$, then for $\varepsilon\leq\varepsilon_0$, $\delta=2\rho\lambda^{-1}$ and $\tau\leq\delta\varepsilon^{-1}$, the Hamiltonian $H$ has a solution $(\theta(t),I(t))$ for which 
\[ |I_1(0)-I_1'| \leq c\varepsilon, \quad |I_1(\tau)-I_1''| \leq c\varepsilon.  \]
Moreover, for all $t \in [0,\tau]$, 
\[ |I_2(t)|\leq c\varepsilon, \quad d(I_1(t),S_1^*)\leq c\varepsilon.\] 
This was the statement to prove.
\end{proof}

\appendix

\section{Technical estimates}\label{app}

Let $S^*$ be a bounded domain in $\R^2$, and for $0<\varepsilon<1$ and a positive constant $\kappa$, consider the domains $S^*(\kappa\varepsilon)=\{I \in \R^2 \; | \; d(I,S^*)\leq \kappa\varepsilon\}$ and $\mathcal{D}(\kappa\varepsilon)=\T^2 \times S^*(\kappa\varepsilon)$.

Let us begin by recalling some elementary estimates. First if $f\in C^r(\mathcal{D}(\kappa\varepsilon))$ for $r\geq 2$, then for $j\in \N^4$, $|j|\leq r$, $\partial^l f \in C^{r-|j|}(\mathcal{D}(\kappa\varepsilon))$ and obviously
\begin{equation}\label{eg1}
|\partial^l f|_{r-|j|}\leq |f|_{r}.
\end{equation}
In particular, this implies that if $f\in C^r(\mathcal{D}(\kappa\varepsilon))$, then its Hamiltonian vector field $X_f$ is of class $C^{r-1}$ and  
\[ |X_f|_{r-1}\leq |f|_{r}. \]
Then, given two functions $f,g\in C^r(\mathcal{D}(\kappa\varepsilon))$, the product $fg$ belongs to $C^r(\mathcal{D}(\kappa\varepsilon))$ and by the Leibniz formula
\begin{equation}\label{eg2}
|fg|_{r} \leq c(r) |f|_{r}|g|_{r}.
\end{equation}
for some constant depending only on $r$. By~\eqref{eg1} and~\eqref{eg2}, the Poisson Bracket $\{f,g\}$ belongs to $C^{r-1}(\mathcal{D}(\kappa\varepsilon))$ and
\begin{equation}\label{poisson}
|\{f,g\}|_{r-1} \leq c(r) |f|_{r}|g|_{r}.
\end{equation}
for another constant $c(r)$ depending only on $r$.

Given any two vector-valued functions $F$ and $G$ of class $C^r$, defined on appropriate domains in $\R^m$ with values in $\R^m$, such that the composition $F \circ G$ makes sense, from Faa di Bruno's formula (see for instance \cite{AR67}) it is easy to see that $F \circ G$ is of class $C^r$ and
\begin{equation}\label{Faa}
|F \circ G|_r \leq c(m,r) |F|_r |G|_r^r.
\end{equation}
for a constant $c(m,r)$ depending only on $m$ and $r$. Faa di Bruno's formula and classical results on the existence and regularity of solutions of differential equations can be used to prove the following lemma.

\begin{lemma}\label{tech} 
Let $\chi\in C^{r+2}(\mathcal{D}(\kappa\varepsilon))$, and assume that
\begin{equation*}
|\chi|_{r+1}\leq \kappa\varepsilon/2, \quad \varepsilon \leq c  
\end{equation*}
for some positive constant $c$. Then, for all $|t|\leq 1$, $X^t_\chi : \mathcal{D}(\kappa\varepsilon/2) \rightarrow \mathcal{D}(\kappa\varepsilon)$ is a well-defined symplectic embedding of class $C^{r+1}$, and we have the estimates
\[ |X^t_\chi-\mathrm{Id}|_0 \leq \kappa\varepsilon/2, \quad |X^t_\chi|_r \leq C  \]
for some constant $C$ depending only on $c$, $\kappa$ and the diameter of $S^*$.
\end{lemma}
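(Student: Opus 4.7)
The plan is to derive the bounds on $X^t_\chi$ directly from the integral form of the Hamiltonian ODE together with Gr\"onwall-type arguments and Faa di Bruno's formula.

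First I would note that since $X_\chi$ is built out of first-order partial derivatives of $\chi$, the elementary estimate~\eqref{eg1} gives
\[ |X_\chi|_0 \leq |\chi|_1 \leq |\chi|_{r+1} \leq \kappa\varepsilon/2. \]
Because $\chi \in C^{r+2}$, the vector field $X_\chi$ is of class $C^{r+1}$, so the classical Cauchy--Lipschitz theorem (with smooth dependence on initial conditions) guarantees that, starting from any initial point $z_0 \in \mathcal{D}(\kappa\varepsilon/2)$, the local flow $X^t_\chi$ exists, is unique, and depends in a $C^{r+1}$ fashion on $z_0$ as long as the trajectory remains in $\mathcal{D}(\kappa\varepsilon)$. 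From the integral representation
\[ X^t_\chi(z_0) = z_0 + \int_0^t X_\chi(X^s_\chi(z_0))\,ds \]
and the $C^0$ bound on $X_\chi$, we immediately get $|X^t_\chi(z_0)-z_0| \leq |t|\kappa\varepsilon/2 \leq \kappa\varepsilon/2$ for $|t|\leq 1$, which shows both that the trajectory never leaves $\mathcal{D}(\kappa\varepsilon)$ (by definition of $\mathcal{D}(\kappa\varepsilon/2)$ as the $\kappa\varepsilon/2$-interior of $\mathcal{D}(\kappa\varepsilon)$) and that $|X^t_\chi - \mathrm{Id}|_0 \leq \kappa\varepsilon/2$. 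Symplecticity is automatic since $X^t_\chi$ is the flow of a Hamiltonian vector field; injectivity and the embedding property follow from the existence of the inverse flow $X^{-t}_\chi$, itself well-defined and $C^{r+1}$ on $\mathcal{D}(\kappa\varepsilon/2)$ by the same argument applied to $-\chi$.

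The remaining task, and the main technical step, is the $C^r$ bound on $X^t_\chi$. Differentiating the integral equation once with respect to the initial condition $z$ gives
\[ \partial X^t_\chi(z) = \mathrm{Id} + \int_0^t \partial X_\chi\bigl(X^s_\chi(z)\bigr)\cdot \partial X^s_\chi(z)\,ds, \]
and since $|\partial X_\chi|_0 \leq |X_\chi|_1 \leq |\chi|_2 \leq \kappa\varepsilon/2 \leq \kappa c/2$, Gr\"onwall's inequality yields a bound on $|\partial X^t_\chi|_0$ uniform in $t\in[-1,1]$ and depending only on $c$ and $\kappa$. For higher-order derivatives, I would proceed by induction on the order $j\leq r$: taking $j$ derivatives of the integral equation and applying the Leibniz formula~\eqref{eg2} and Faa di Bruno's formula~\eqref{Faa} to the composition $X_\chi \circ X^s_\chi$, the resulting right-hand side splits into a polynomial expression in the already-controlled derivatives $\partial^i X^s_\chi$ with $i<j$ (bounded by the inductive hypothesis) plus a linear term involving $\partial^j X^s_\chi$ multiplied by $\partial X_\chi$. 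A final application of Gr\"onwall's lemma closes the induction and gives $|X^t_\chi|_j \leq C_j$ for $|t|\leq 1$, with $C_j$ depending only on $c$, $\kappa$, and on $\mathrm{diam}(S^*)$ (the latter entering via the base $C^0$-size of the identity map, which is what one starts from in the $j=0$ case).

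The main obstacle is precisely this last step: the higher-order estimate is not conceptually deep but requires keeping careful track, through Faa di Bruno, of the combinatorial structure of the derivatives of $X_\chi \circ X^s_\chi$ and arranging them in a form where Gr\"onwall's lemma applies. Since we only need existence of a constant $C$ and not its explicit form, this amounts to a standard (if tedious) induction; the hypothesis $\varepsilon\leq c$ ensures that all the exponential factors produced by Gr\"onwall remain uniformly bounded.
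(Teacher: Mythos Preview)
Your proposal is correct and follows the standard route (integral equation, Gr\"onwall, induction via Faa di Bruno). The paper itself does not give a proof of this lemma at all: it simply states that ``the proof of the above lemma is a simple adaptation of Lemma~3.15 in \cite{DH09}, see also Lemma~A.1 in \cite{BouII12}.'' Your argument is precisely the kind of direct ODE estimate those references carry out, so there is nothing to compare --- you have supplied what the paper defers to the literature.
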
 

Note that the constants $C$ depend only on $c$, $\kappa$ and on the diameter of $S^*(\kappa\varepsilon)$, but since $\varepsilon<1$, the latter is bounded by $d+2\kappa$ where $d$ is the diameter of $S^*$.

The proof of the above lemma is a simple adaptation of Lemma $3.15$ in \cite{DH09}, see also Lemma $A.1$ in \cite{BouII12}.

\addcontentsline{toc}{section}{References}
\bibliographystyle{amsalpha}
\bibliography{Nonconvex}

\providecommand{\bysame}{\leavevmode\hbox to3em{\hrulefill}\thinspace}
\providecommand{\MR}{\relax\ifhmode\unskip\space\fi MR }
\providecommand{\MRhref}[2]{%
  \href{http://www.ams.org/mathscinet-getitem?mr=#1}{#2}
}
\providecommand{\href}[2]{#2}
\begin{thebibliography}{DdlLS06}

\bibitem[AR67]{AR67}
R.~Abraham and J.~Robbin, \emph{Transversal mappings and flows}, Benjamin,
  New-York, 1967.

\bibitem[Arn63a]{Arn63a}
V.I. Arnol'd, \emph{{Proof of a theorem of {A}.{N}. {K}olmogorov on the
  invariance of quasi-periodic motions under small perturbations}}, Russ. Math.
  Surv. \textbf{18} (1963), no.~5, 9--36.

\bibitem[Arn63b]{Arn63b}
\bysame, \emph{{Small denominators and problems of stability of motion in
  classical and celestial mechanics}}, Russ. Math. Surv. \textbf{18} (1963),
  no.~6, 85--191.

\bibitem[Arn64]{Arn64}
\bysame, \emph{Instability of dynamical systems with several degrees of
  freedom}, Sov. Math. Doklady \textbf{5} (1964), 581--585.

\bibitem[Arn94]{Arn94}
\bysame, \emph{Mathematical problems in classical physics}, Trends and
  perspectives in applied mathematics, Appl. Math. Sci., vol. 100, Springer,
  New York, 1994, pp.~1--20.

\bibitem[Ber08]{Ber08}
P.~Bernard, \emph{The dynamics of pseudographs in convex {H}amiltonian
  systems}, Journal of the American Math. Society \textbf{21} (2008), no.~3,
  615--669.

\bibitem[Ber10]{Ber10}
\bysame, \emph{Large normally hyperbolic cylinders in a priori stable
  {H}amiltonian systems}, Ann. Henri Poincar\'e \textbf{11} (2010), no.~5,
  929--942.

\bibitem[BK87]{BK87}
D.~Bernstein and A.~Katok, \emph{Birkhoff periodic orbits for small
  perturbations of completely integrable {H}amiltonian systems with convex
  {H}amiltonians}, Invent. math. \textbf{88} (1987), 225--241.

\bibitem[BKZ11]{BKZ}
P.~Bernard, V.~Kaloshin, and K.~Zhang, \emph{Arnold diffusion in arbitrary
  degrees of freedom and crumpled 3-dimensional normally hyperbolic invariant
  cylinders}, Preprint (2011), arXiv:1112.2773.

\bibitem[BN12]{BN09}
A.~Bounemoura and L.~Niederman, \emph{Generic {N}ekhoroshev theory without
  small divisors}, Ann. Inst. Fourier \textbf{62} (2012), no.~1, 277--324.

\bibitem[Bou12]{Bou12}
A.~Bounemoura, \emph{Optimal stability and instability for near-linear
  hamiltonians}, Annales Henri Poincaré \textbf{13} (2012), no.~4, 857--868.

\bibitem[Bou13]{BouII12}
\bysame, \emph{Normal forms, stability and splitting of invariant manifolds
  {II}. {F}initely differentiable {H}amiltonians}, Regul. Chaotic Dyn.
  \textbf{18} (2013), no.~3, 261--276.

\bibitem[Che92]{Che92}
W.~Chen, \emph{Birkhoff periodic orbits for small perturbations of completely
  integrable {H}amiltonian systems with nondegenerate {H}essian}, Twist
  mappings and their applications (R.~McGehee and K.R. Meyer, eds.), vol. IMA
  Math. Appl. 44, Springer, 1992, pp.~87--94.

\bibitem[Che13]{Che13}
C.-Q. Cheng, \emph{Arnold diffusion in nearly integrable hamiltonian systems},
  Preprint (2013), arXiv:1207.4016v2.

\bibitem[CY04]{CY04}
C.-Q. Cheng and J.~Yan, \emph{{Existence of diffusion orbits in \textit{a
  priori} unstable {H}amiltonian systems}}, J. Differ. Geom. \textbf{67}
  (2004), no.~3, 457--517.

\bibitem[CY09]{CY09}
\bysame, \emph{{Arnold diffusion in {H}amiltonian systems: \textit{a priori}
  unstable case}}, J. Differ. Geom. \textbf{82} (2009), no.~2, 229--277.

\bibitem[DdlLS06]{DDS06}
A.~Delshams, R.~de~la Llave, and T.~M. Seara, \emph{A geometric mechanism for
  diffusion in {H}amiltonian systems overcoming the large gap problem:
  heuristics and rigorous verification on a model}, Mem. Amer. Math. Soc.
  \textbf{179} (2006), no.~844.

\bibitem[DH09]{DH09}
A.~Delshams and G.~Huguet, \emph{Geography of resonances and {A}rnold diffusion
  in a priori unstable {H}amiltonian systems}, Nonlinearity \textbf{22} (2009),
  no.~8, 1997--2077.

\bibitem[GR07]{GR07}
M.~Gidea and C.~Robinson, \emph{{Shadowing orbits for transition chains of
  invariant tori alternating with Birkhoff zones of instability}}, Nonlinearity
  \textbf{20} (2007), no.~5, 1115--1143.

\bibitem[GR09]{GR09}
\bysame, \emph{{Obstruction argument for transition chains of tori interspersed
  with gaps}}, Discrete Contin. Dyn. Syst., Ser. S \textbf{2} (2009), no.~2,
  393--416.

\bibitem[Her92]{Her92}
M.R. Herman, \emph{Dynamics connected with indefinite normal torsion}, Twist
  mappings and their applications (R.~McGehee and K.R. Meyer, eds.), vol. IMA
  Math. Appl. 44, Springer, 1992, pp.~153--182.

\bibitem[Ily86]{Ily86}
I.S. Ilyashenko, \emph{A steepness test for analytic functions}, Russian Math.
  Surveys \textbf{41} (1986), 229--230.

\bibitem[Kol54]{Kol54}
A.N. Kolmogorov, \emph{On the preservation of conditionally periodic motions
  for a small change in {H}amilton's function}, Dokl. Akad. Nauk. SSSR
  \textbf{98} (1954), 527--530.

\bibitem[KZ12]{KZ12}
V.~Kaloshin and K.~Zhang, \emph{A strong form of {A}rnold diffusion for two and
  a half degrees of freedom}, preprint (2012), arXiv:1212.1150.

\bibitem[KZZ10]{KZZ10}
V.~Kaloshin, K.~Zhang, and Y.~Zheng, \emph{Almost dense orbit on energy
  surface}, X{VI}th {I}nternational {C}ongress on {M}athematical {P}hysics,
  World Sci. Publ., Hackensack, NJ, 2010, pp.~314--322.

\bibitem[Mar12a]{Mar12a}
J.-P. Marco, \emph{Generic properties of classical systems on the torus
  $\mathbb{T}^2$}, unpublished preprint.

\bibitem[Mar12b]{Mar12b}
\bysame, \emph{Nets of hyperbolic annuli in generic nearly integrable systems
  on $\mathbb{A}^3$}, unpublished preprint.

\bibitem[Mat04]{Mat04}
J.N. Mather, \emph{Arnold diffusion {I} : {A}nnouncement of results}, J. of
  Math. Sciences \textbf{124} (2004), no.~5, 5275--5289.

\bibitem[Mat12]{Mat12}
\bysame, \emph{Arnold diffusion by variational methods}, Essays in mathematics
  and its applications, Springer, Heidelberg, 2012, pp.~271--285.

\bibitem[Mos60]{Mos60}
J.~Moser, \emph{On the elimination of the irrationality condition and
  {B}irkhoff's concept of complete stability}, Bol. Soc. Mat. Mexicana (2)
  \textbf{5} (1960), 167--175.

\bibitem[Nek73]{Nek73}
N.N. Nekhoroshev, \emph{Stable lower estimates for smooth mappings and the
  gradients of smooth functions}, Math. USSR Sbornik \textbf{19} (1973), no.~3,
  425--467.

\bibitem[Nek77]{Nek77}
\bysame, \emph{An exponential estimate of the time of stability of nearly
  integrable {H}amiltonian systems}, Russian Math. Surveys \textbf{32} (1977),
  no.~6, 1--65.

\bibitem[Nek79]{Nek79}
\bysame, \emph{An exponential estimate of the time of stability of nearly
  integrable {H}amiltonian systems {II}}, Trudy Sem. Petrovs \textbf{5} (1979),
  5--50.

\bibitem[Nie06]{Nie06}
L.~Niederman, \emph{Hamiltonian stability and subanalytic geometry}, Ann. Inst.
  Fourier \textbf{56} (2006), no.~3, 795--813.

\bibitem[Nie07]{Nie07}
\bysame, \emph{Prevalence of exponential stability among nearly integrable
  {H}amiltonian systems}, Erg. Th. Dyn. Sys. \textbf{27} (2007), no.~3,
  905--928.

\bibitem[Tre04]{Tre04}
D.~Treschev, \emph{{Evolution of slow variables in a priori unstable
  Hamiltonian systems}}, Nonlinearity \textbf{17} (2004), no.~5, 1803--1841.

\end{thebibliography}

\end{document}